\let\SF@@footnote\footnote
\def\footnote{\ifx\protect\@typeset@protect
    \expandafter\SF@@footnote
  \else
    \expandafter\SF@gobble@opt
  \fi
}
\def\csname SF@gobble@opt \endcsname{\@ifnextchar[
  \SF@gobble@twobracket
  \@gobble
}
\edef\SF@gobble@opt{\noexpand\protect
  \expandafter\noexpand\csname SF@gobble@opt \endcsname}
\def\SF@gobble@twobracket[#1]#2{}
\def\RSthmtxt{theorem~}\newref{thm}{name = \RSthmtxt}}
\def\RSlemtxt{lemma~}\newref{lem}{name = \RSlemtxt}}
\theoremstyle{plain}
\newtheorem{thm}{\protect\theoremname}[section]
\theoremstyle{definition}
\newtheorem{defn}[thm]{\protect\definitionname}
\theoremstyle{plain}
\newtheorem{lem}[thm]{\protect\lemmaname}
\theoremstyle{plain}
\newtheorem{cor}[thm]{\protect\corollaryname}
\newlist{casenv}{enumerate}{4}
\setlist[casenv]{leftmargin=*,align=left,widest={iiii}}
\setlist[casenv,1]{label={{\itshape\ \casename} \arabic*.},ref=\arabic*}
\setlist[casenv,2]{label={{\itshape\ \casename} \roman*.},ref=\roman*}
\setlist[casenv,3]{label={{\itshape\ \casename\ \alph*.}},ref=\alph*}
\setlist[casenv,4]{label={{\itshape\ \casename} \arabic*.},ref=\arabic*}
\theoremstyle{remark}
\newtheorem{rem}[thm]{\protect\remarkname}
\theoremstyle{definition}
\newtheorem{example}[thm]{\protect\examplename}
\theoremstyle{plain}
\newtheorem{prop}[thm]{\protect\propositionname}
\def\@fnsymbol#1{\ensuremath{\ifcase#1\or *\or **\or \ddagger\or
   \mathsection\or \mathparagraph\or \|\or \dagger\dagger
   \or \ddagger\ddagger \else\@ctrerr\fi}}
\def\namedlabel#1#2{\begingroup
    #2%
    \def\@currentlabel{#2}%
    \phantomsection\label{#1}\endgroup
}
\tikzset{   pt/.style={insert path={node[scale=2]{.}}},   dnup/.style={insert path={ [pt] .. controls +(0,1) and +(0,-1) .. +(#1,2) [pt]}},   dndn/.style={insert path={ [pt] .. controls +(0,0.25) and +(0,0.25) .. +(#1,0) [pt]}},   upup/.style={insert path={ [pt] .. controls +(0,-0.25) and +(0,-0.25) .. +(#1,0) [pt]}}, upup2/.style={insert path={ [pt] .. controls +(0,-0.5) and +(0,-0.5) .. +(#1,0) [pt]}}, }
\newcommand{\Rt}{\widetilde{\mathcal{R}}_E}
\newcommand{\Lt}{\widetilde{\mathcal{L}}_E}
\DeclareMathOperator{\Hom}{Hom}
\DeclareMathOperator{\im}{\mathsf{im}}
\DeclareMathOperator{\PT}{\mathcal{PT}}
\DeclareMathOperator{\C}{\mathcal{C}}
\DeclareMathOperator{\IS}{\mathcal{IS}}
\DeclareMathOperator{\T}{\mathcal{T}}
\DeclareMathOperator{\Rc}{\mathcal{R}}
\DeclareMathOperator{\Lc}{\mathcal{L}}
\DeclareMathOperator{\Hc}{\mathcal{H}}
\DeclareMathOperator{\Jc}{\mathcal{J}}
\DeclareMathOperator{\rank}{rank}
\DeclareMathOperator{\op}{op}
\DeclareMathOperator{\Op}{\mathcal{O}}
\DeclareMathOperator{\OF}{\mathcal{OF}}
\DeclareMathOperator{\POI}{\mathcal{POI}}
\DeclareMathOperator{\IO}{\mathcal{IO}}
\DeclareMathOperator{\IC}{\mathcal{IC}}
\def\RSlemtxt{Lemma~}
\def\RSthmtxt{Theorem~}
\providecommand{\corollaryname}{Corollary}
  \providecommand{\definitionname}{Definition}
  \providecommand{\examplename}{Example}
  \providecommand{\lemmaname}{Lemma}
  \providecommand{\propositionname}{Proposition}
  \providecommand{\remarkname}{Remark}
 \providecommand{\casename}{Case}
\providecommand{\theoremname}{Theorem}
\providecommand{\casename}{Case}
\providecommand{\corollaryname}{Corollary}
\providecommand{\definitionname}{Definition}
\providecommand{\examplename}{Example}
\providecommand{\lemmaname}{Lemma}
\providecommand{\propositionname}{Proposition}
\providecommand{\remarkname}{Remark}
\providecommand{\theoremname}{Theorem}
\begin{document}
\title{Algebras of reduced $E$-Fountain semigroups and the generalized ample
identity II}
\author{Itamar Stein\thanks{Mathematics Unit, Shamoon College of Engineering, 77245 Ashdod, Israel}\\
\Envelope \, Steinita@gmail.com}
\maketitle
\begin{abstract}
We study the generalized right ample identity, introduced by the author
in a previous paper. Let $S$ be a reduced $E$-Fountain semigroup
which satisfies the congruence condition. We can associate with $S$
a small category $\mathcal{C}(S)$ whose set of objects is identified
with the set $E$ of idempotents and its morphisms correspond to elements
of $S$. We prove that $S$ satisfies the generalized right ample
identity if and only if every element of $S$ induces a homomorphism
of left $S$-actions between certain classes of generalized Green's
relations. In this case, we interpret the associated category $\mathcal{C}(S)$
as a discrete form of a Peirce decomposition of the semigroup algebra.
We also give some natural examples of semigroups satisfying this identity.
\end{abstract}

\section{Introduction}

Let $S$ be a semigroup and let $\Bbbk$ be a field. It is of interest
to study the semigroup algebra $\Bbbk S$. In many cases we can associate
with $S$ a category $\mathcal{C}$ such that the semigroup algebra
$\Bbbk S$ is isomorphic to the category algebra $\Bbbk\mathcal{C}$.
In \cite{Stein2021} such a result is obtained for a certain class
of $E$-Fountain semigroups and in this context the generalized right
ample identity is introduced. Given a subset of idempotents $E$ of
$S$ we can define two equivalence relations $\Lt$ and $\Rt$ on
$S$. We say that $a\,\Lt\,b$ ($a\,\Rt\,b$) if $a$ and $b$ have
the same set of right (respectively, left) identities from $E$. The
semigroup $S$ is called reduced $E$-Fountain if every $\Lt$ and
$\Rt$-class contains a (unique) idempotent from $E$ and $ef=e$
if and only if $fe=e$ for every $e,f\in E$. In this case, for every
$a\in S$ we denote by $a^{\ast}$ ($a^{+})$ the unique idempotent
from $E$ in the $\Lt$-class (respectively, $\Rt$-class) of $a$.
If in addition, $\Lt$ and $\Rt$ are right and left congruences respectively
then we say that $S$ satisfies the congruence condition and we can
associate a small category $\mathcal{C}(S)$ with the semigroup $S$.
The set of objects of $\mathcal{C}(S)$ is identified with the set
$E$ and every element of $S$ corresponds to a morphism from $a^{\ast}$
to $a^{+}$(full details will be given in \subsecref{Semigroups}).
A reduced $E$-Fountain semigroup which satisfies the congruence condition
is also called a DRC-semigroup in the literature and this class is
under current active research (see \cite{Jones2021,Wang2022,wang2023munn}).
Let $S$ be a reduced $E$-Fountain semigroup which satisfies the
congruence condition. We say that $S$ satisfies the generalized right
ample identity if $\left(e\left(a\left(eaf\right)^{\ast}\right)^{+}\right)^{\ast}=\left(a\left(eaf\right)^{\ast}\right)^{+}$
holds for every $a\in S$ and $e,f\in E$. Under some mild conditions
on $S$, it is proved in \cite{Stein2021} that if the generalized
right ample identity holds then $\Bbbk S$ is isomorphic to the algebra
$\Bbbk\mathcal{C}(S)$ of its associated category. This is a generalization
of several results (\cite{Guo2012,Margolis2018A,Solomon1967,Stein2017,Stein2018erratum,Steinberg2006})
that were useful in the study of algebras of various classes of semigroups
such as inverse semigroups (\cite{Steinberg2008} and \cite[Part IV]{Steinberg2016})
and monoids of partial functions (\cite{Stein2016,Stein2019,Stein2020})
- see also \cite{Guo2018,Wang2017} for different but related approaches.
This motivates consideration of the generalized right ample identity.
However, at first sight this identity could seem unnatural and \cite{Stein2021}
suggests only one motivating example - the Catalan monoid. The goal
of this paper is to obtain a deeper understanding of the generalized
right ample identity. We will show that it has a more natural description
and we will explore additional examples of semigroups which satisfy
it. We remark that recently, Wang introduced \cite{wang2023munn}
a different generalization of the ample condition for the class of
reduced $E$-Fountain semigroups which satisfy the congruence condition
($=$DRC-semigroups). See \cite[Section 2]{wang2023munn} for a comparison
between the two different generalizations of the ample condition.

Consider a reduced $E$-Fountain semigroup which satisfies the congruence
condition. The semigroup $S$ acts partially on the left of every
$\Lt$-class. For every $a\in S$, right multiplication by $a$ induces
a function from the $\Lt$-class of $a^{+}$ to the $\Lt$-class $a^{\ast}$.
In \subsecref{Green'sRelations} we show that this function is a homomorphism
of partial left $S$-actions if and only if the generalized right
ample identity holds. In this case we can identify the morphisms of
the associated category $\mathcal{C}(S)$ with all such homomorphisms.
In \subsecref{Modules} we assume that $S$ is finite and turn to
consider $\Bbbk S$-modules. Under a certain condition on $S,$ we
show that if the generalized right ample condition holds then the
left modules formed by linear combinations of elements of an $\Lt$-class
are projective modules of $\Bbbk S$, thus extending a result from
\cite{Margolis2021}. In this case the linear category $\mathcal{L}$
generated from $\mathcal{C}(S)^{\op}$ is the linear category of $\Bbbk S$-module
homomorphisms between these projective modules. If we consider the
right Peirce decomposition of $\Bbbk S$ into a direct sum of the
above-mentioned projective modules (see \cite[Chapter 3, Section 7]{MRJacobson1956})
and intuitively identify it with the linear category $\mathcal{L}$
then we can think of the category $\mathcal{C}(S)$ as a ``discrete''
type of a Peirce decomposition for the semigroup $S$ itself. In \secref{Examples}
we give two additional examples of semigroups which satisfy the generalized
right ample identity, namely, bounded linear operators on a Hilbert
space and order preserving functions with a fixed point. We study
in greater detail the latter one. In particular, we prove that its
algebra is semisimple and isomorphic to the algebra of the (inverse)
semigroup of all order-preserving partial permutations.

\textbf{Acknowledgments:} The author thanks Professor Stuart Margolis
for several helpful conversations and in particular for suggesting
\propref{Iso_of_algebras_fixed_point}. The author is grateful to
the referee for his/her valuable comments and suggestions which improved
the paper significantly.

\section{Preliminaries }

\subsection{\label{subsec:Semigroups}Semigroups}

Let $S$ be a semigroup and let $S^{1}=S\cup\{1\}$ be the monoid
formed by adjoining a formal unit element. Recall that Green's preorders
$\leq_{\Rc}$, $\leq_{\Lc}$ and $\leq_{\Jc}$ are defined by:
\begin{align*}
a\leq_{\Rc}b\iff & aS^{1}\subseteq bS^{1}\\
a\leq_{\Lc}b\iff & S^{1}a\subseteq S^{1}b\\
a\leq_{\Jc}b\iff & S^{1}aS^{1}\subseteq S^{1}bS^{1}.
\end{align*}
The associated Green's equivalence relations on $S$ are denoted by
$\Rc$, $\Lc$ and $\Jc$. Recall also that $\Hc=\Rc\cap\Lc$. It
is well-known that $\Lc$ ($\Rc$) is a right congruence (respectively,
left congruence). We can define a partial order on the set of idempotents
$E(S)$ of $S$ according to
\[
e\leq f\iff ef=e=fe.
\]

We assume familiarity with additional basic notions of semigroup theory
that can be found in standard textbooks such as \cite{Howie1995}.

Let $E\subseteq E(S)$ be a subset of idempotents of $S$. We define
two equivalence relations $\Lt$ and $\Rt$ on $S$ by 
\[
a\,\Lt\,b\iff(\forall e\in E\quad ae=a\iff be=b)
\]
\[
a\,\Rt\,b\iff(\forall e\in E\quad ea=a\iff eb=b).
\]

These relations are one type of ``generalized'' Green's relations
(see \cite{Gould2010}). Note that $\Lc\subseteq\Lt$ and $\Rc\subseteq\Rt$.
\begin{defn}
The semigroup $S$ is called \emph{$E$-Fountain} if every $\Lt$-class
contains an idempotent from $E$ and every $\Rt$-class contains an
idempotent from $E$. 
\end{defn}

We remark that this property is also called ``$E$-semiabundant''
in the literature.
\begin{defn}
An $E$-Fountain semigroup $S$ is called \emph{reduced} if 
\[
\mbox{\ensuremath{ef=e\iff fe=e}}
\]
 for every $e,f\in E$. Equivalently, $S$ is reduced if $\leq_{\Lc}=\leq_{\Rc}$
when restricted to the set $E$.
\end{defn}

A reduced \emph{$E$-}Fountain\emph{ }semigroup is called a ``DR
semigroup'' in \cite{Stokes2015}. In such a semigroup, every $\Lt$-class
contains a unique idempotent from $E$. The unique idempotent from
$E$ in the $\Lt$-class of $a\in S$ is denoted $a^{\ast}$. The
idempotent $a^{\ast}$ is the minimal right identity of $a$ from
the set $E$ (with respect to the natural partial order on idempotents).
In other words, if $e\in E$ satisfies $ae=a$ then $ea^{\ast}=a^{\ast}e=a^{\ast}$.
Dually, every $\Rt$-class contains a unique idempotent from $E$.
The unique idempotent from $E$ in the $\Rt$-class of $a$ which
is also its minimal left identity from the set $E$ is denoted $a^{+}$.
If $e\in E$ satisfies $ea=a$ then $ea^{+}=a^{+}e=a^{+}$. See \cite{Stokes2015}
for proofs and additional details. 
\begin{defn}
Let $S$ be a reduced $E$-Fountain semigroup. We say that $S$ satisfies
the \emph{congruence condition} if $\Lt$ is a right congruence and
$\Rt$ is a left congruence. 
\end{defn}

It is well-known that a reduced $E$-Fountain semigroup $S$ satisfies
the congruence condition if and only if the identities $(ab)^{\ast}=(a^{\ast}b)^{\ast}$
and $(ab)^{+}=(ab^{+})^{+}$ hold - see \cite[Lemma 4.1]{Gould2010}.
In this case we can define a category $\mathcal{C}(S)$ in the following
way. The objects are in one-to-one correspondence with elements of
the set $E$ of idempotents. The morphisms are in one-to-one correspondence
with elements of $S$. For every $a\in S$ the associated morphism
$C(a)$ has domain $a^{\ast}$ and range $a^{+}$. If the range of
$C(a)$ is the domain of $C(b)$ (that is, if $b^{\ast}=a^{+}$) the
composition $C(b)\cdot C(a)$ is defined to be $C(ba)$. The congruence
condition and the assumption $b^{\ast}=a^{+}$ implies that $(ba)^{+}=(ba^{+})^{+}=(bb^{\ast})^{+}=b^{+}$
and dually we can prove $(ba)^{\ast}=a^{\ast}$. It follows that $\mathcal{C}(S)$
is indeed a category - see \cite{Lawson1991} for additional details.

Let $S$ be a reduced $E$-Fountain semigroup which satisfies the
congruence condition. If $E$ is a subband, it is proved in \cite[Lemma 3.9]{Stein2021}
that $E$ is commutative. In this case, $S$ is called an \emph{$E$-Ehresmann}
semigroup, see \cite{Gould2010,Lawson1991} for more facts and examples. 

Recall that a semigroup $S$ is called \emph{inverse }if for every
$a\in S$ there exists a unique element denoted $a^{-1}$ such that
$aa^{-1}a=a$ and $a^{-1}aa^{-1}=a^{-1}$ (this is in fact a very
special case of an $E$-Ehresmann semigroup where $E=E(S)$). One
of the most important examples of an inverse semigroup is the symmetric
inverse monoid $\IS_{X}$ which consists of all \emph{partial }injective
transformations (also called \emph{partial permutations}) on the set
$X$.

Denote by $\PT_{X}$ the monoid of partial functions on a set $X$
- composing functions from right to left. Then $\PT_{X}^{\op}$ is
the monoid with the same underlying set but composing functions from
left to right. A \emph{partial left action} of $S$ on $X$ is a semigroup
homomorphism $\psi:S\to\PT_{X}$. Equivalently, we say that \emph{$S$
acts partially on the left of $X$} or that $X$ is a \emph{partial
left $S$-action}. Another convention is to write $s\cdot x$ instead
of $\psi(s)(x)$ for $s\in S$ and $x\in X$. Note that $s\cdot x$
might be undefined. A set $I\subseteq S$ is called a \emph{left ideal}
of $S$ if $sx\in I$ for every $s\in S$ and $x\in I$. If $I_{1},I_{2}$
are two left ideals such that $I_{1}\subseteq I_{2}$ then we can
define a partial $S$-action on the difference $I_{2}\setminus I_{1}$
by 
\[
s\cdot x=\begin{cases}
sx & sx\notin I_{1}\\
\text{undefined} & sx\in I_{1}
\end{cases}
\]
for every $s\in S$ and $x\in I_{2}\setminus I_{1}$. Assume that
$S$ acts partially on the left of two sets $X,Y$. A function $f:X\to Y$
is a \emph{homomorphism of partial left $S$-actions} if for every
$x\in X$ and $s\in S$, $s\cdot x$ is defined if and only if $s\cdot f(x)$
is defined and in this case 
\[
f(s\cdot x)=s\cdot f(x).
\]
There is a dual notion of course. A \emph{partial right action} of
$S$ on a set $X$ is a homomorphism $\psi:S\to\PT_{X}^{\op}$. Equivalently,
we say that $X$ is a \emph{partial right $S$-action}. Homomorphisms
of partial right $S$-actions are defined in the obvious dual way.
In this paper almost all actions will be left actions so we will omit
the word ``left''. If not stated explicitly otherwise, a partial
action is always a partial \emph{left }action.

\subsection{Linear categories }

Let $\mathcal{C}$ be a small category. We denote by $\mathcal{C}^{0}$
and $\mathcal{C}^{1}$ the sets of objects and morphisms of $\mathcal{C}$
respectively. For $e,f\in\mathcal{C}^{0}$ we write $\mathcal{C}(e,f)$
for the hom-set of all morphisms whose domain is $e$ and range is
$f$. 

Let $\Bbbk$ be a field. A $\Bbbk$-\emph{linear category} is a category
$L$ enriched over the category of $\Bbbk$-vector spaces. This means
that every hom-set of $L$ is a $\Bbbk$-vector space and the composition
of morphisms is a bilinear map with respect to the vector space operations.
Note that a $\Bbbk$-algebra is a $\Bbbk$-linear category with one
object. A \emph{functor} of $\mathbb{\Bbbk}$-linear categories is
a category functor which is also a linear transformation when restricted
to any hom-set. Let $\mathcal{C}$ be a category. We can form a $\Bbbk$-linear
category $\Bbbk[\mathcal{C}]$ in the following way. The objects of
$\Bbbk[\mathcal{C}]$ and $\mathcal{C}$ are identical, and for every
two objects $e,f$ the hom-set $\Bbbk[\mathcal{C}](e,f)$ is the $\Bbbk$-vector
space with basis $\mathcal{C}(e,f)$. In other words, $\Bbbk[\mathcal{C}](e,f)$
contains all formal linear combinations of morphisms from $\mathcal{C}(e,f)$.
The composition of morphisms in $\Bbbk[\mathcal{C}${]} is defined
naturally in the only way that extends the composition of $\mathcal{C}$
and forms a bilinear map.

\subsection{Algebras and modules}

Let $\Bbbk$ be a field and let $A$ be a finite dimensional and unital
$\Bbbk$-algebra. Recall that an $A$-module $P$ is called \emph{projective}
if the functor $\Hom_{A}(P,-)$ is an exact functor. Equivalently,
$P$ is projective if it is a direct summand of $A^{k}$ (for some
$k\in\mathbb{N}$) as an $A$-module. It is well-known that $Ae$
is a projective $A$-module for every idempotent $e\in A$. Let $E=\{e_{1},\ldots,e_{n}\}$
be a set of idempotents from $A$. Recall that $E$ is a complete
set of orthogonal idempotents if ${\displaystyle \sum_{i=1}^{n}e_{i}=1_{A}}$
and $e_{i}e_{j}=0$ if $i\neq j$. It is well-known that in this case
$A\simeq{\displaystyle \bigoplus_{i=1}^{n}Ae_{i}}$ as left $A$-modules
and $A\simeq{\displaystyle \bigoplus_{i,j=1}^{n}e_{i}Ae_{j}}$ as
$\Bbbk$-vector spaces. The first decomposition is called a \emph{right}
\emph{Peirce decomposition }of $A$ and the second is a \emph{two
sided} \emph{Peirce decomposition} - both are relative to the complete
set of orthogonal idempotents. Peirce decompositions are fundamental
in representation theory and ring theory (see \cite[Chapter III, Section 7]{MRJacobson1956},
\cite[Section 21]{Lam1991} or \cite[Section I.4]{Assem2006}). For
proofs and other basic facts on algebras and modules see \cite[Chapter I]{Assem2006}.

Fixing a complete set of orthogonal idempotents $E=\{e_{1},\ldots,e_{n}\}$,
we can associate with $A$ a $\Bbbk$-linear category $\mathcal{L}(A)$
whose objects are the projective modules of the form $Ae_{i}$ and
its morphisms are the $A$-module homomorphisms between them. In other
words, the hom-set of morphisms with domain $Ae_{i}$ and range $Ae_{j}$
is $\Hom_{A}(Ae_{i},Ae_{j})$. For the sake of simplicity, we call
$\mathcal{L}(A)$ also a Peirce decomposition of $A$ as they essentially
contain the same information (note that $\dim_{\Bbbk}\Hom_{A}(Ae_{i},Ae_{j})=\dim_{\Bbbk}e_{i}Ae_{j}$).

We will be mainly interested in this paper in semigroup and category
algebras. The \emph{semigroup algebra} $\mathbb{\Bbbk}S$ of a semigroup
$S$ is defined in the following way. It is a $\mathbb{\Bbbk}$-vector
space with basis the elements of $S$, that is, it consists of all
formal linear combinations
\[
\{k_{1}s_{1}+\ldots+k_{n}s_{n}\mid k_{i}\in\mathbb{\Bbbk},\,s_{i}\in S\}.
\]
The multiplication in $\mathbb{\Bbbk}S$ is the linear extension of
the semigroup multiplication. Note that in general $\Bbbk S$ might
not possess a unit element. 

The \emph{category algebra} $\mathbb{\Bbbk}\mathcal{C}$ of a (small)
category $\mathcal{C}$ is defined in the following way. It is a $\mathbb{\Bbbk}$-vector
space with the morphisms of $\mathcal{C}$ as a basis, that is, it
consists of all formal linear combinations
\[
\{k_{1}m_{1}+\ldots+k_{n}m_{n}\mid k_{i}\in\mathbb{\Bbbk},\,m_{i}\in\mathcal{C}^{1}\}.
\]
The multiplication in $\mathbb{\Bbbk}\mathcal{C}$ is the linear extension
of the following:
\[
m^{\prime}\cdot m=\begin{cases}
m^{\prime}m & \text{if \ensuremath{m^{\prime}m} is defined}\\
0 & \text{otherwise}.
\end{cases}
\]

If $\mathcal{C}$ has a finite number of objects then the unit element
of $\Bbbk\mathcal{C}$ is${\displaystyle \sum_{e\in\mathcal{C}^{0}}1_{e}}$
where $1_{e}$ is the identity morphism of the object $e$ of $\mathcal{C}$.

\section{The generalized right ample identity}

In \cite{Stein2021} the generalized right ample identity is defined
by several equivalent definitions, one of them being the following.
\begin{defn}
Let $S$ be a reduced $E$-Fountain semigroup which satisfies the
congruence condition. We say that the \emph{generalized right ample
identity (}or\emph{ generalized right ample condition) }holds in $S$
if the identity 
\[
\left(e\left(a\left(eaf\right)^{\ast}\right)^{+}\right)^{\ast}=\left(a\left(eaf\right)^{\ast}\right)^{+}
\]
holds for every $a\in S$ and $e,f\in E$.

If $S$ is an $E$-Ehresmann semigroup i.e., where $E$ is a commutative
subsemigroup (in fact, it is enough to assume that $E$ is a subsemigroup
with the other conditions being as stated) it is proved in \cite{Stein2021}
that the generalized right ample identity reduces to the ``standard''
well-studied \emph{right ample identity}
\[
ea=a(ea)^{\ast}
\]
for every $a\in S$ and $e\in E$. The motivation for defining the
generalized right ample identity comes from \thmref{iso_theorem}.
To state it we need two more definitions. We say that a relation $R$
on $S$ is \emph{principally finite} if for every $a\in S$ the set
$\{c\in S\mid c\,R\,a\}$ is finite. Define a relation $\trianglelefteq_{l}$
on $S$ by the rule that $a\trianglelefteq_{l}b$ if and only if $a=be$
for some $e\in E$. If $S$ is a reduced $E$-Fountain semigroup then
the relation $\trianglelefteq_{l}$ is reflexive but in general it
is not anti-symmetric nor transitive (even if the congruence condition
holds). We need the stronger assumption that $E$ is a commutative
subsemigroup to ensure that $\trianglelefteq_{l}$ is a partial order
(see \cite[Proposition 3.13]{Lawson1991}).
\end{defn}

\begin{thm}[{\cite[Theorems 4.2 and 4.4]{Stein2021}}]
\label{thm:iso_theorem}Let $S$ be a reduced $E$-Fountain semigroup
which satisfies the congruence condition and let $\mathcal{C}(S)$
be its associated category. Let $\Bbbk$ be a field and assume that
$\trianglelefteq_{l}$ is principally finite, so that the linear transformation
$\varphi:\Bbbk S\to\Bbbk\mathcal{C}(S)$ defined on basis elements
by 
\[
\varphi(a)=\sum_{c\trianglelefteq_{l}a}C(c)
\]
is well-defined. The linear transformation $\varphi$ is a homomorphism
of $\Bbbk$-algebras if and only if $S$ satisfies the generalized
right ample identity. If in addition we assume that $\trianglelefteq_{l}$
is contained in a principally finite partial order, then $\varphi$
is an isomorphism of $\Bbbk$-algebras if and only if $S$ satisfies
the generalized right ample identity.
\end{thm}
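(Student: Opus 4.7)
The plan is to expand both sides of the proposed identity $\varphi(ab)=\varphi(a)\varphi(b)$ in the basis $\{C(g)\}_{g\in S}$ of $\Bbbk\mathcal{C}(S)$ and match coefficients. Because $C(c)\cdot C(d)$ equals $C(cd)$ when $c^{\ast}=d^{+}$ and vanishes otherwise, one has
\[
\varphi(a)\varphi(b)=\sum_{\substack{c\trianglelefteq_{l}a,\ d\trianglelefteq_{l}b\\ c^{\ast}=d^{+}}}C(cd),\qquad \varphi(ab)=\sum_{g\trianglelefteq_{l}ab}C(g).
\]
Hence the homomorphism property reduces to establishing, for each $a,b\in S$, an equality of multisets between composable pairs $(c,d)$ mapping to $cd$ and elements $g\trianglelefteq_{l}ab$.

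For the ``if'' direction I would exhibit the bijection explicitly. Any composable pair has the form $c=ae'$, $d=bh$ with $(ae')^{\ast}=(bh)^{+}$; using the congruence-condition identities $(xy)^{\ast}=(x^{\ast}y)^{\ast}$ and $(xy)^{+}=(xy^{+})^{+}$ one can show $cd=ae'bh=abf$ for a suitable $f\in E$, so $cd\trianglelefteq_{l}ab$. Conversely, given $g=abh$ with $h\in E$, one constructs the canonical gluing idempotent by applying the operations $\ast$ and $+$ to the pieces $a$ and $bh$, and then checks that the resulting $c$ and $d$ are composable. The verification that the compatibility $c^{\ast}=d^{+}$ holds is precisely where the generalized right ample identity $(e(a(eaf)^{\ast})^{+})^{\ast}=(a(eaf)^{\ast})^{+}$ enters; injectivity of the assignment follows from the uniqueness of $\ast$- and $+$-idempotents. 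The principal obstacle is the bookkeeping: one must rewrite the natural decomposition so that it matches the precise shape of the identity, which is the bulk of the computational work.

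For the ``only if'' direction I would specialize the supposed homomorphism identity to a cleverly chosen product, say comparing $\varphi(ea\cdot f)$ with $\varphi(ea)\varphi(f)$ for arbitrary $a\in S$ and $e,f\in E$. Isolating the coefficient of a single well-chosen basis element $C(m)$ on both sides (the natural candidate is the morphism $C$ applied to a term built from $a(eaf)^{\ast}$) forces the idempotent equation constituting the generalized right ample identity; here the fact that distinct elements of $S$ give linearly independent basis vectors of $\Bbbk\mathcal{C}(S)$ is essential.

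Finally, for the isomorphism claim I would work with the given principally finite partial order $\leq$ extending $\trianglelefteq_{l}$ and enumerate a basis of $\Bbbk S$ compatibly with it. The matrix of $\varphi$ is then upper unitriangular, since $\varphi(a)=C(a)+\sum_{c<_{l}a}C(c)$ with only strictly smaller terms appearing, so an inverse can be constructed M\"obius-style one basis element at a time; principal finiteness guarantees that each such inverse is a finite linear combination, so $\varphi$ is bijective even when $S$ is infinite.
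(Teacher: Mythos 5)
First, a point of reference: this paper does not prove the statement at all --- it is imported verbatim from \cite[Theorems 4.2 and 4.4]{Stein2021}, so there is no in-paper proof to compare against. Judged on its own terms, your overall strategy is the standard and correct one (it is essentially the strategy of \cite{Stein2021}, generalizing Steinberg's argument for inverse semigroups): expand $\varphi(a)\varphi(b)$ and $\varphi(ab)$ in the basis $\{C(g)\}$, show the composable pairs biject with $\{g\mid g\trianglelefteq_{l}ab\}$, and get invertibility from unitriangularity with respect to a principally finite order extending $\trianglelefteq_{l}$. The last part of your argument is fine as written.

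The problem is that the two places where all the content lives are left as declarations of intent. For the ``if'' direction you never actually produce the bijection or verify where the identity is used. Concretely: writing $c=ac^{\ast}$ and $d=bd^{\ast}$, compatibility $c^{\ast}=d^{+}$ gives $cd=a(d^{+}d)=ad=(ab)d^{\ast}$ directly (no congruence-condition juggling needed, contrary to your claim about $ae'bh$); uniqueness of the pair over a given $g$ requires showing $d^{\ast}=g^{\ast}$, which follows from $g^{\ast}=(ad)^{\ast}=((ad^{+})d)^{\ast}=((ad^{+})^{\ast}d)^{\ast}=(d^{+}d)^{\ast}=d^{\ast}$, so that $d=bg^{\ast}$ and $c=a(bg^{\ast})^{+}$ are forced; and the compatibility $(a(bg^{\ast})^{+})^{\ast}=(bg^{\ast})^{+}$ is exactly the generalized right ample identity applied with the substitution $a\mapsto b$, $e\mapsto a^{\ast}$, $f\mapsto$ the idempotent with $g=abf$, after using $(a^{\ast}bf)^{\ast}=(abf)^{\ast}=g^{\ast}$. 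None of this is in your write-up; ``the canonical gluing idempotent'' and ``the bookkeeping is the bulk of the work'' is precisely the part that must be exhibited, since without the explicit correspondence one cannot see that each $C(g)$ occurs with coefficient exactly $1$. The ``only if'' direction is in worse shape: the entire content of that implication is the choice of the product and of the basis element whose coefficient is compared, and you name neither --- you only gesture at ``a cleverly chosen product'' and ``a well-chosen basis element.'' As it stands the proposal is a correct plan with the decisive verifications missing, not a proof.
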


As already mentioned, in this paper we seek a deeper understanding
of the generalized right ample identity. The first step is to observe
that we can somewhat simplify this identity - this was also observed
independently by Peter R. Jones \cite[Proposition 2.9]{wang2023munn}.
\begin{lem}
Let $S$ be a reduced $E$-Fountain semigroup. The semigroup $S$
satisfies the generalized right ample identity if and only if it satisfies
the identity
\[
\left(e\left(a\left(ea\right)^{\ast}\right)^{+}\right)^{\ast}=\left(a\left(ea\right)^{\ast}\right)^{+}
\]

for every $e\in E$ and $a\in S$.
\end{lem}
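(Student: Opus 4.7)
The plan is to prove the lemma by double implication, with each direction being a clean substitution. The direction $(\Rightarrow)$ is the easy one: in the generalized identity, specialize $f = a^{\ast}$. Then $eaf = eaa^{\ast} = ea$ because $a^{\ast}$ is a right identity of $a$, so $(eaf)^{\ast} = (ea)^{\ast}$, and the generalized identity collapses to the simpler form.

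For the reverse direction $(\Leftarrow)$, the idea is to apply the hypothesized simple identity to the element $a' = af$ in place of $a$. This yields
\[
\bigl(e\bigl(af(eaf)^{\ast}\bigr)^{+}\bigr)^{\ast}=\bigl(af(eaf)^{\ast}\bigr)^{+},
\]
and I only need to rewrite $af(eaf)^{\ast}$ as $a(eaf)^{\ast}$ to recover the generalized identity. The key auxiliary observation is that $f$ acts as a right identity on $eaf$: indeed, $(eaf)f = eaf^{2} = eaf$ since $f\in E$ is idempotent. By the minimality property of $(eaf)^{\ast}$ in the natural partial order on $E$, this forces $(eaf)^{\ast}\leq f$, and the reduced hypothesis on $S$ then gives $f(eaf)^{\ast}=(eaf)^{\ast}=(eaf)^{\ast}f$. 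Multiplying on the left by $a$ yields $af(eaf)^{\ast} = a(eaf)^{\ast}$, which is the desired rewriting.

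Both $\ast$-sides of the equation produced by the substitution then match exactly the two sides of the generalized right ample identity for $a,e,f$, completing the proof.

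There is essentially no serious obstacle here; the only non-trivial point is the small lemma that $f$ commutes with, and absorbs into, $(eaf)^{\ast}$, which follows from the combination of minimality of $(eaf)^{\ast}$ as a right identity and the reduced axiom $ef=e\iff fe=e$ in $E$. The argument does not require the congruence condition directly, only the basic structure of minimal idempotent identities in a reduced $E$-Fountain semigroup.
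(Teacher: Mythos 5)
Your proof is correct and follows essentially the same route as the paper: substitute $f=a^{\ast}$ for the easy direction, and for the converse apply the simplified identity to $a'=af$ and absorb $f$ into $(eaf)^{\ast}$ via $(eaf)f=eaf$ and the minimality of $(eaf)^{\ast}$. The only cosmetic difference is that you invoke the reduced axiom explicitly, whereas the paper gets $f(eaf)^{\ast}=(eaf)^{\ast}$ directly from $(eaf)^{\ast}\leq f$ in the natural partial order on idempotents.
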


\begin{proof}
If $S$ satisfies the generalized right ample identity, we can substitute
$f=a^{\ast}$ and obtain the simplified identity. In the other direction,
let $e,f\in E$ and $a\in S$ and set $a^{\prime}=af$. The simplified
identity implies 
\[
\left(e\left(a^{\prime}\left(ea^{\prime}\right)^{\ast}\right)^{+}\right)^{\ast}=\left(a^{\prime}\left(ea^{\prime}\right)^{\ast}\right)^{+}
\]
hence 
\begin{equation}
\left(e\left(af\left(eaf\right)^{\ast}\right)^{+}\right)^{\ast}=\left(af\left(eaf\right)^{\ast}\right)^{+}.\label{eq:GRA_Identity_proof}
\end{equation}

Now, $eaff=eaf$ so $(eaf)^{\ast}f=(eaf)^{\ast}$ by the definition
of $\Lt$. Therefore, $f(eaf)^{\ast}=(eaf)^{\ast}$ by the assumption
that $S$ is reduced so (\ref{eq:GRA_Identity_proof}) implies that
\[
\left(e\left(a\left(eaf\right)^{\ast}\right)^{+}\right)^{\ast}=\left(a\left(eaf\right)^{\ast}\right)^{+}
\]
as required.
\end{proof}

\subsection{\label{subsec:Green'sRelations}Green's relations and partial actions}

In this subsection we give a more informative interpretation for the
generalized right ample identity. It is natural to ask how much the
generalized Green's relations are similar to the standard ones. We
show that a certain property of Green's relations holds also for the
generalized relations if and only if the generalized right ample identity
holds.

Let $S$ be a general semigroup and let $x\in S$. Recall that $L_{x}$
denotes the $\Lc$-class of $x$.
\begin{lem}
The set $L_{x}$ is a partial $S$-action according to 
\[
s\cdot x=\begin{cases}
sx & sx\in L_{x}\\
\text{undefined} & \text{otherwise}
\end{cases}
\]
\end{lem}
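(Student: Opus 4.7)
The plan is to recognize that what needs to be verified is precisely that the assignment $\psi: S \to \PT_{L_x}$, sending $s$ to the partial self-map of $L_x$ described in the statement, is a well-defined semigroup homomorphism. Well-definedness of each $\psi(s)$ as a partial function is immediate: for a fixed $s \in S$ and $y \in L_x$, the element $sy$ is uniquely determined in $S$, so the prescription ``take $sy$ if it lies in $L_x$, otherwise undefined'' genuinely describes a partial function $L_x \to L_x$. Thus the only real content is verifying that $\psi(st) = \psi(s) \circ \psi(t)$ as partial functions.

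The key computation reduces to the following claim: for every $s,t\in S$ and $y\in L_x$, one has $sty \in L_x$ if and only if both $ty \in L_x$ and $sty \in L_x$, and then the two sides agree (trivially, as both yield the element $sty$). The nontrivial direction is showing that if $sty \in L_x$ then already $ty \in L_x$. For this, I would use the chain
\[
sty \leq_{\Lc} ty \leq_{\Lc} y,
\]
which is immediate from $sty \in S^{1}(ty)$ and $ty \in S^{1} y$. Assuming $sty\,\Lc\, y$, we also have $y \leq_{\Lc} sty$, so transitivity of the preorder forces $ty\,\Lc\,y$, i.e., $ty \in L_x$. The converse direction is trivial: if $ty \in L_x$ and $s(ty) \in L_x$, then $sty \in L_x$ by definition.

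Combining these two observations shows that $(st)\cdot y$ and $s\cdot(t\cdot y)$ are defined for exactly the same $y \in L_x$, and are equal to $sty$ whenever defined, which is precisely the statement that $\psi$ is a semigroup homomorphism into $\PT_{L_x}$. I do not expect any real obstacle here, as the argument is just a direct manipulation of the definition of $\Lc$ together with the fact that $\leq_{\Lc}$ is a preorder; no appeal to the $E$-Fountain structure or to the congruence condition is needed for this lemma.
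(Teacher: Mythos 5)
Your proof is correct, but it takes a more hands-on route than the paper. The paper observes that $S^{1}x=\{a\in S\mid a\leq_{\Lc}x\}$ and $L_{x}^{<}=\{a\in S\mid a\leq_{\Lc}x,\ a\notin L_{x}\}$ are both left ideals of $S$, and then invokes the general fact that the (Rees-type) quotient of one left ideal by another carries a partial left $S$-action; since $L_{x}=S^{1}x\setminus L_{x}^{<}$, the lemma follows in one line. You instead verify $\psi(st)=\psi(s)\circ\psi(t)$ directly, with the only nontrivial point being that $sty\in L_{x}$ forces $ty\in L_{x}$, which you settle via the chain $y\leq_{\Lc}sty\leq_{\Lc}ty\leq_{\Lc}y$. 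Note that this is exactly the contrapositive of the assertion that $L_{x}^{<}$ is a left ideal (once you fall strictly below the $\Lc$-class you cannot climb back), so the two arguments rest on the same combinatorial fact about the preorder $\leq_{\Lc}$; the paper's version buys brevity and reusability of the ``quotient of left ideals'' template (which the paper indeed reuses implicitly for $\Lt(e)$ via the citation to Margolis--Steinberg), while yours is self-contained and makes the associativity check explicit. You are also right that no $E$-Fountain or congruence hypothesis is needed here.
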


\begin{proof}
Note that $S^{1}x=\{s\in S\mid s\leq_{\Lc}x\}$ and $L_{x}^{<}=\{s\in S\mid s\leq_{\Lc}x\wedge s\notin L_{x}\}$
are both left ideals of $S$. Then $L_{x}=S^{1}x\setminus L_{x}^{<}$
is a difference of two left ideals and therefore a partial left $S$-action.
\end{proof}
Let $\alpha\in S$ and $e,f\in E(S)$ such that $e\,\Rc\,\alpha\,\Lc\,f$.
Set $\beta$ to be the inverse of $\alpha$ such that $\alpha\beta=e$
and $\beta\alpha=f$ (we prefer to use $\alpha,\beta$ rather than
$a,b$ for elements of $S$ that are used for defining functions by
right multiplication). According to Green's lemma \cite[Lemma 2.2.1]{Howie1995}
the function $\rho_{\alpha}:L_{e}\to L_{f}$ defined by $\rho_{\alpha}(x)=x\alpha$
is a well-defined bijection whose inverse is $\rho_{\beta}:L_{f}\to L_{e}$.
\begin{lem}
\label{lem:Greens_lemma}The function $\rho_{\alpha}$ is a homomorphism
of partial $S$-actions.
\end{lem}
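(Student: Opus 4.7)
The plan is to unpack the definitions and use the fact that $\rho_\alpha\colon L_e \to L_f$ and $\rho_\beta\colon L_f \to L_e$ are mutually inverse bijections, together with the elementary identity $xe = x$ for $x \in L_e$, to match the domains of the partial $S$-actions on the two sides. The functional equality will follow from plain associativity.

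Concretely, fix $s \in S$ and $x \in L_e$. By definition of the partial action, $s \cdot x$ is defined (with value $sx$) exactly when $sx \in L_e$, and $s \cdot \rho_\alpha(x) = s \cdot (x\alpha)$ is defined (with value $s(x\alpha)$) exactly when $s(x\alpha) \in L_f$. Once both domains are shown to coincide, the equality $\rho_\alpha(s \cdot x) = (sx)\alpha = s(x\alpha) = s\cdot \rho_\alpha(x)$ is immediate from associativity.

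For the forward direction, suppose $sx \in L_e$; then $sx$ lies in the domain of the bijection $\rho_\alpha\colon L_e \to L_f$, so $(sx)\alpha \in L_f$, that is, $s(x\alpha) \in L_f$. For the backward direction, suppose $s(x\alpha) \in L_f$; then $s(x\alpha)$ lies in the domain of the bijection $\rho_\beta\colon L_f \to L_e$, so $s(x\alpha)\beta \in L_e$. Rewriting, $s(x\alpha)\beta = sx(\alpha\beta) = sxe$, and since $x \in L_e$ we have $xe = x$, so $sxe = sx$; hence $sx \in L_e$.

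The only point requiring a bit of care is the small observation that $xe = x$ for every $x \in L_e$, which uses $x = ue$ for some $u \in S^1$ together with $e^2 = e$; everything else is either associativity or a direct application of Green's lemma. I do not anticipate any real obstacle.
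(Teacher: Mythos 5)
Your proof is correct and follows essentially the same route as the paper: both directions of the domain equivalence are obtained by applying the well-defined bijections $\rho_{\alpha}$ and $\rho_{\beta}$, and the backward direction reduces to $sx\alpha\beta = sxe = sx$. The only difference is that you spell out the justification of $xe=x$ for $x\in L_{e}$, which the paper uses silently.
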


\begin{proof}
Let $x\in L_{e}$ and $s\in S$. If $sx\in L_{e}$ and $sx\alpha\in L_{f}$
it is clear that 
\[
\rho_{\alpha}(sx)=sx\alpha=s\rho_{\alpha}(x).
\]
It is left to show that 
\[
sx\in L_{e}\iff sx\alpha\in L_{f}.
\]
If $sx\in L_{e}$ then $\rho_{\alpha}(sx)=sx\alpha\in L_{f}$ because
$\rho_{\alpha}$ is well-defined. In the other direction, if $sx\alpha\in L_{f}$
then 
\[
\rho_{\beta}(sx\alpha)=sx\alpha\beta=sx\in L_{e}.
\]
\end{proof}
We now show that the generalized right ample identity is equivalent
to a similar property for the generalized Green's relation $\Lt$.
Fix $S$ to be a reduced $E$-Fountain semigroup which satisfies the
congruence condition. We denote the $\Lt$ -class of $e\in E$ by
$\Lt(e)$. It is proved in \cite[Lemma 5.1]{Margolis2021} that for
every $e\in E$, the set $\Lt(e)$ is a partial $S$-action according
to
\[
s\cdot x=\begin{cases}
sx & sx\in\Lt(e)\\
\text{undefined} & \text{otherwise}
\end{cases}
\]

for $s\in S$ and $x\in\Lt(e)$. 

For every $\alpha\in S$ we can define a function $r_{\alpha}:\Lt(\alpha^{+})\to\Lt(\alpha^{\ast})$
by $r_{\alpha}(x)=x\alpha$. Indeed, if $x\in\Lt(\alpha^{+})$ then
\[
(r_{\alpha}(x))^{\ast}=(x\alpha)^{\ast}=(x^{\ast}\alpha)^{\ast}=(\alpha^{+}\alpha)^{\ast}=\alpha^{\ast}
\]
so $r_{\alpha}(x)\in\Lt(\alpha^{\ast})$ hence $r_{\alpha}$ is a
well-defined function. 

In general, $r_{\alpha}$ is not a homomorphism of partial $S$-actions
but it is if the generalized right ample identity holds.
\begin{thm}
\label{thm:GRA_thm}Let $S$ be a reduced $E$-Fountain semigroup
which satisfies the congruence condition. The semigroup $S$ satisfies
the generalized right ample condition if and only if $r_{\alpha}$
is a homomorphism of partial $S$-actions for every $\alpha\in S$.
\end{thm}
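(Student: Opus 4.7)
The plan is to reduce the homomorphism condition on $r_{\alpha}$ to a short implication about idempotents below $\alpha^{+}$, and then match that implication with the simplified form of the generalized right ample identity established in the preceding lemma. The equation $r_{\alpha}(s\cdot x)=s\cdot r_{\alpha}(x)$ is automatic by associativity, so the only content is the ``defined iff defined'' clause. First I would use the congruence condition identity $(ab)^{\ast}=(a^{\ast}b)^{\ast}$ to rewrite $(sx\alpha)^{\ast}=((sx)^{\ast}\alpha)^{\ast}$. Setting $y:=(sx)^{\ast}$ and noting that $\alpha^{+}$ is a right identity of $sx$ (because $x\alpha^{+}=x$ when $x^{\ast}=\alpha^{+}$), we have $y\leq\alpha^{+}$ in $E$. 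The choice $s=y$, $x=\alpha^{+}$ shows that every such $y$ arises in this way, so the homomorphism condition quantified over all $s\in S$ and $x\in\Lt(\alpha^{+})$ is equivalent to
\[
(\ast)\qquad(y\alpha)^{\ast}=\alpha^{\ast}\iff y=\alpha^{+}
\]
for every $\alpha\in S$ and every $y\in E$ with $y\leq\alpha^{+}$; the ``$\Leftarrow$'' direction here is automatic, since $y=\alpha^{+}$ gives $y\alpha=\alpha$.

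Granting this reformulation, the implication ``generalized right ample $\Rightarrow$ hom'' becomes immediate: given $\alpha$ and $y\in E$ with $y\leq\alpha^{+}$ and $(y\alpha)^{\ast}=\alpha^{\ast}$, I apply the simplified identity $\left(e\left(a(ea)^{\ast}\right)^{+}\right)^{\ast}=\left(a(ea)^{\ast}\right)^{+}$ with $e=y$ and $a=\alpha$. The hypothesis forces the right hand side to collapse to $(\alpha\alpha^{\ast})^{+}=\alpha^{+}$, so the identity reads $(y\alpha^{+})^{\ast}=\alpha^{+}$. Since $y\leq\alpha^{+}$ gives $y\alpha^{+}=y$ and $y\in E$ forces $y^{\ast}=y$, we conclude $y=\alpha^{+}$.

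For the converse I would fix $e\in E$ and $\alpha\in S$, set $\beta=\alpha(e\alpha)^{\ast}$ and $g=\beta^{+}$, and show directly that $\beta^{\ast}=(e\alpha)^{\ast}$: one inequality follows from $\beta(e\alpha)^{\ast}=\beta$, and the other from $e\beta=e\alpha$, which makes $\beta^{\ast}$ a right identity of $e\alpha$ and thus forces $(e\alpha)^{\ast}\leq\beta^{\ast}$. Consequently $(e\beta)^{\ast}=\beta^{\ast}$, i.e.\ $e\cdot\beta$ is defined in the partial action of $S$ on $\Lt(\beta^{\ast})$. Since $r_{\beta}(g)=g\beta=\beta$, the assumed homomorphism property of $r_{\beta}$ then forces $e\cdot g$ to be defined, which is exactly $(eg)^{\ast}=g$ -- the simplified generalized right ample identity.

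The main obstacle is isolating the reformulation $(\ast)$; once one sees that only $y\leq\alpha^{+}$ occurs, both directions reduce to short idempotent manipulations. The most delicate step in the converse is the equality $\beta^{\ast}=(e\alpha)^{\ast}$, which uses only the reduced $E$-Fountain property together with the congruence condition.
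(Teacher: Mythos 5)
Your proof is correct and follows essentially the same route as the paper: the forward direction substitutes $e=(sx)^{\ast}$ (your $y$) into the simplified identity and collapses it using $(sx\alpha)^{\ast}=\alpha^{\ast}$, and the converse applies the homomorphism property of $r_{\alpha(e\alpha)^{\ast}}$ at the point $(\alpha(e\alpha)^{\ast})^{+}$, exactly as in the paper. Your reformulation $(\ast)$ is just a clean repackaging of the paper's computation (and is verified correctly, including the realizability of every $y\leq\alpha^{+}$ via $s=y$, $x=\alpha^{+}$), not a genuinely different argument.
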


\begin{proof}
First assume $S$ satisfies the generalized right ample condition.
Let $\text{\mbox{\ensuremath{x\in\Lt(\alpha^{+})}}}$. If $\mbox{\ensuremath{sx\in\Lt(\alpha^{+})}}$
and $sx\alpha\in\Lt(\alpha^{\ast})$ it is obvious that 
\[
r_{\alpha}(sx)=sx\alpha=sr_{\alpha}(x).
\]
It is left to show that 
\[
sx\in\Lt(\alpha^{+})\iff sx\alpha\in\Lt(\alpha^{\ast}).
\]
If $sx\in\Lt(\alpha^{+})$ then $(sx)^{\ast}=\alpha^{+}$ and, 
\[
(sx\alpha)^{\ast}=((sx)^{\ast}\alpha)^{\ast}=(\alpha^{+}\alpha)^{\ast}=\alpha^{\ast}
\]
so $sx\alpha\in\Lt(\alpha^{\ast})$ as required. In the other direction,
assume $sx\alpha\in\Lt(\alpha^{\ast})$ so $(sx\alpha)^{\ast}=\alpha^{\ast}$.
Choose $a=\alpha$ and $e=(sx)^{\ast}$. The generalized right ample
identity implies 
\[
((sx)^{\ast}(\alpha((sx)^{\ast}\alpha)^{\ast})^{+})^{\ast}=(\alpha((sx)^{\ast}\alpha)^{\ast})^{+}.
\]
The congruence condition implies $(sx\alpha)^{\ast}=((sx)^{\ast}\alpha)^{\ast}$
so we obtain 
\[
((sx)^{\ast}(\alpha(sx\alpha)^{\ast})^{+})^{\ast}=(\alpha(sx\alpha)^{\ast})^{+}.
\]
By assumption, $(sx\alpha)^{\ast}=\alpha^{\ast}$ so 
\[
((sx)^{\ast}(\alpha\alpha^{\ast})^{+})^{\ast}=(\alpha\alpha^{\ast})^{+}
\]
hence 
\[
((sx)^{\ast}\alpha^{+})^{\ast}=\alpha^{+}
\]
which implies
\[
(sx\alpha^{+})^{\ast}=\alpha^{+}
\]
by another use of the congruence condition. Finally, $\alpha^{+}=x^{\ast}$
because $\mbox{\ensuremath{x\in\Lt(\alpha^{+})}}$ so $\alpha^{+}=(sxx^{\ast})^{\ast}=(sx)^{\ast}$
hence $sx\in\Lt(\alpha^{+})$ as required. This finishes the ``only
if'' part. 

For the other direction assume $r_{\alpha}$ is a homomorphism of
partial $S$-actions for every $\alpha\in S$, and choose $e\in E$
and $a\in S$. We need to prove 
\[
(e(a(ea)^{\ast})^{+})^{\ast}=(a(ea)^{\ast})^{+}.
\]
First note that $eaa^{\ast}=ea$ hence $(ea)^{\ast}\leq a^{\ast}$.
This implies 
\[
(a(ea)^{\ast})^{\ast}=(a^{\ast}(ea)^{\ast})^{\ast}=((ea)^{\ast})^{\ast}=(ea)^{\ast}.
\]
Now, consider the function
\[
r_{a(ea)^{\ast}}:\Lt((a(ea)^{\ast})^{+})\to\Lt((a(ea)^{\ast})^{\ast})=\Lt((ea)^{\ast}).
\]
Note that 
\[
er_{a(ea)^{\ast}}((a(ea)^{\ast})^{+})=e(a(ea)^{\ast})^{+}a(ea)^{\ast}=ea(ea)^{\ast}=ea\in\Lt((ea)^{\ast}).
\]
The fact that $r_{a(ea)^{\ast}}$ is a homomorphism of partial $S$-actions
implies that $e(a(ea)^{\ast})^{+}\in\Lt((a(ea)^{\ast})^{+})$ hence
\[
(e(a(ea)^{\ast})^{+})^{\ast}=(a(ea)^{\ast})^{+}
\]
as required.
\end{proof}
For future reference we state also the dual result.
\begin{cor}
Let $S$ be a reduced $E$-Fountain semigroup which satisfies the
congruence condition. The semigroup $S$ satisfies the generalized
left ample identity
\[
(((ae)^{+}a)^{\ast}e)^{+}=((ae)^{+}a)^{\ast}
\]
(for every $a\in S$, $e\in E$) if and only if the function $l_{\alpha}:\Rt(\alpha^{\ast})\to\Rt(\alpha^{+})$
defined by $l_{\alpha}(x)=\alpha x$ is a homomorphism of partial
right $S$-actions for every $\alpha\in S$.
\end{cor}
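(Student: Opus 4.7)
The plan is to deduce the corollary from \thmref{GRA_thm} by passing to the opposite semigroup. Let $S^{\op}$ denote the semigroup on the same underlying set with reversed multiplication $a*b := ba$. First I would verify that $(S^{\op},E)$ is again a reduced $E$-Fountain semigroup satisfying the congruence condition, and that passing to $S^{\op}$ interchanges the generalized Green's relations and the idempotent assignments: $\Lt_{S^{\op}} = \Rt_{S}$, $\Rt_{S^{\op}} = \Lt_{S}$, $a^{*_{\op}} = a^{+}$, and $a^{+_{\op}} = a^{\ast}$. All of these are immediate from the definitions once one observes that ``$a*e=a$ in $S^{\op}$'' means ``$ea=a$ in $S$''; in particular, the requirement that $\Lt_{S^{\op}}$ be a right congruence of $S^{\op}$ is exactly the requirement that $\Rt_{S}$ be a left congruence of $S$, and similarly on the other side.

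Next I would translate the generalized right ample identity for $S^{\op}$ into an identity in $S$. Substituting $a*b = ba$ together with the exchange $*_{\op}\leftrightarrow +$ into
\[
\left(e*\left(a*(e*a)^{*_{\op}}\right)^{+_{\op}}\right)^{*_{\op}} = \left(a*(e*a)^{*_{\op}}\right)^{+_{\op}}
\]
and simplifying term by term, the left-hand side becomes $(((ae)^{+}a)^{\ast}e)^{+}$ and the right-hand side becomes $((ae)^{+}a)^{\ast}$. Hence the generalized right ample identity in $S^{\op}$ is literally the generalized left ample identity in $S$.

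Finally I would translate the action-theoretic condition. In $S^{\op}$ the map $r_{\alpha}^{\op}: \Lt_{S^{\op}}(\alpha^{+_{\op}}) \to \Lt_{S^{\op}}(\alpha^{*_{\op}})$ is defined by $x \mapsto x*\alpha = \alpha x$, and under the dictionary above this is precisely the map $l_\alpha : \Rt(\alpha^{\ast}) \to \Rt(\alpha^{+})$ of the statement. Since a partial left action of $S^{\op}$ is the same data as a partial right action of $S$, and a homomorphism of the former is a homomorphism of the latter, applying \thmref{GRA_thm} to $S^{\op}$ and rewriting both sides of the equivalence through the dictionary yields the corollary. The only step requiring any real care is the term-by-term translation of the identity; the rest is routine bookkeeping about opposite semigroups, so I do not anticipate a substantive obstacle.
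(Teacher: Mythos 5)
Your proposal is correct and is exactly the argument the paper intends: the corollary is stated ``for future reference'' as the dual result with no written proof, the implicit justification being precisely the passage to the opposite semigroup that you carry out. Your term-by-term translation of the identity and of the map $r_{\alpha}^{\op}$ into $l_{\alpha}$ checks out, so you have simply made explicit what the paper leaves to duality.
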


It turns out that every homomorphism of partial $S$-actions between
sets of the form $\Lt(e)$ is $r_{\alpha}$ for a certain $\alpha\in S$
as we now show.
\begin{lem}
\label{lem:every_action_hom__is_right_mult}Let $S$ be a reduced
$E$-Fountain semigroup. Let $e,f\in E$ and assume that $\mbox{\ensuremath{F:\Lt(e)\to\Lt(f)}}$
is a homomorphism of partial left $S$-actions. Then $F=r_{\alpha}$
for $\alpha\in S$ with $\alpha^{+}=e$ and $\alpha^{\ast}=f$.
\end{lem}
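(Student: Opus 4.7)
The natural candidate is $\alpha = F(e)$, and my plan is to verify the three required properties in turn. Since $F$ takes values in $\Lt(f)$, we immediately get $\alpha^{\ast} = F(e)^{\ast} = f$ for free, leaving the two nontrivial tasks: establishing $\alpha^{+} = e$ and then identifying $F$ with $r_{\alpha}$.

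To show $\alpha^{+} = e$, I would first note that $e \cdot e = e$ is defined in $\Lt(e)$, so the homomorphism property yields $e\alpha = eF(e) = F(e\cdot e) = F(e) = \alpha$. Hence $e$ is a left identity of $\alpha$, so $\alpha^{+} \leq e$ in the partial order on $E$. The main obstacle is upgrading this inequality to equality: at the level of mere set maps, $\alpha^{+} < e$ would be perfectly compatible with $e\alpha = \alpha$. The trick is to exploit the symmetric (both-defined-or-both-undefined) clause in the definition of a partial action homomorphism. Assume toward contradiction that $\alpha^{+} < e$ strictly; since each $\Lt$-class contains a unique idempotent from $E$, this forces $\alpha^{+} \notin \Lt(e)$, so the action $\alpha^{+} \cdot e = \alpha^{+}$ is \emph{undefined} in $\Lt(e)$. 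On the other hand, $\alpha^{+} \cdot F(e) = \alpha^{+}\alpha = \alpha \in \Lt(f)$ is clearly defined. Applying the homomorphism property to $s = \alpha^{+}$ and $x = e$ produces the desired contradiction, so $\alpha^{+} = e$.

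Finally, for any $x \in \Lt(e)$, we have $x^{\ast} = e$, hence $xe = x \in \Lt(e)$; thus $x \cdot e$ is defined. The homomorphism property then gives $F(x) = F(x \cdot e) = xF(e) = x\alpha = r_{\alpha}(x)$, completing the verification that $F = r_{\alpha}$.
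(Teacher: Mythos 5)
Your proposal is correct and follows essentially the same route as the paper: take $\alpha=F(e)$, use $F(ee)=eF(e)$ to get $\alpha^{+}\leq e$, then exploit the defined-iff-defined clause of the homomorphism with $s=\alpha^{+}$ (since $\alpha^{+}\cdot F(e)=\alpha$ is defined) to place $\alpha^{+}$ in $\Lt(e)$ and conclude $\alpha^{+}=e$ by uniqueness of idempotents, finishing with $F(x)=F(xe)=x\alpha$. The only cosmetic difference is that you phrase the middle step as a contradiction where the paper argues directly.
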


\begin{proof}
Set $\alpha=F(e)$. It is clear that $\alpha^{\ast}=f$. To show $\alpha^{+}=e$
first observe that 
\[
\alpha=F(e)=F(ee)=eF(e)=e\alpha
\]
 so $\alpha^{+}\leq e$. Next, 
\[
\alpha=\alpha^{+}\alpha=\alpha^{+}F(e).
\]
The fact that $F$ is a homomorphism of partial $S$-actions implies
that
\[
\alpha^{+}e\in\Lt(e)
\]
and $\alpha^{+}\leq e$ implies $\alpha^{+}=\alpha^{+}e$. Therefore,
$\alpha^{+}\in\Lt(e)$ so $\alpha^{+}=e$ by the uniqueness of idempotents
from $E$ in an $\Lt$-class. Finally, for every $x\in\Lt(e)$ we
have $x^{\ast}=e$ so
\[
F(x)=F(xe)=xF(e)=x\alpha=r_{\alpha}(x)
\]
therefore, $F=r_{\alpha}$ as required.
\end{proof}
As a corollary we obtain a concrete interpretation of the associated
category $\mathcal{C}(S)$. Define $\mathcal{D}(S)$ to be a category
as follows: The objects of $\mathcal{D}(S)$ are sets of the form
$\Lt(e)$ for $e\in E$ and the hom-set $\mathcal{D}(S)(e,f)$ is
the set of all homomorphisms of partial left $S$-actions $F:\Lt(e)\to\Lt(f)$.
\begin{cor}
\label{cor:Category_isomomrphis}Let $S$ be a reduced E-Fountain
semigroup which satisfies the congruence condition and the generalized
right ample identity. The category $\mathcal{C}(S)^{\op}$ is isomorphic
to the category $\mathcal{D}(S)$.
\end{cor}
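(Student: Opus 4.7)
The plan is to construct an explicit isomorphism of categories $\Phi : \mathcal{C}(S)^{\op} \to \mathcal{D}(S)$ that, on objects, sends $e \in E$ to $\Lt(e)$, and, on morphisms, sends $C(a)$ (viewed in the opposite category, so with domain $a^{+}$ and range $a^{\ast}$) to the right-multiplication map $r_{a} : \Lt(a^{+}) \to \Lt(a^{\ast})$. The key point making $\Phi$ well defined on morphisms is \thmref{GRA_thm}, which, under the standing hypothesis that $S$ satisfies the generalized right ample identity, guarantees that each $r_{a}$ is actually a homomorphism of partial left $S$-actions and hence lies in $\mathcal{D}(S)$.

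The first step is to verify that $\Phi$ is a functor. On objects the assignment $e \mapsto \Lt(e)$ is well defined since $e \in \Lt(e)$. For functoriality, I note that the identity morphism at $e$ in $\mathcal{C}(S)^{\op}$ is $C(e)$, and $r_{e}(x) = xe = x$ for every $x \in \Lt(e)$ (using $x^{\ast} = e$), so $\Phi$ preserves identities. For composition, two morphisms $C(a)^{\op} : a^{+} \to a^{\ast}$ and $C(b)^{\op} : b^{+} \to b^{\ast}$ in $\mathcal{C}(S)^{\op}$ are composable precisely when $a^{\ast} = b^{+}$, and their composite in $\mathcal{C}(S)^{\op}$ equals $C(ab)^{\op}$. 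On the other side, $r_{b} \circ r_{a}$ is defined under the same condition and acts as $x \mapsto (xa)b = x(ab) = r_{ab}(x)$, so $\Phi$ respects composition.

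Next I check that $\Phi$ is a bijection on objects and on hom-sets. Bijectivity on objects follows from the uniqueness of the idempotent from $E$ in each $\Lt$-class, so distinct $e, e' \in E$ produce distinct classes $\Lt(e), \Lt(e')$. For hom-sets, surjectivity is exactly the content of \lemref{every_action_hom__is_right_mult}: every homomorphism of partial $S$-actions $F : \Lt(e) \to \Lt(f)$ is of the form $r_{\alpha}$ for some $\alpha \in S$ with $\alpha^{+} = e$ and $\alpha^{\ast} = f$, i.e. $F = \Phi(C(\alpha)^{\op})$. Injectivity is immediate: if $r_{a} = r_{b}$ with $a^{+} = b^{+} = e$, then evaluating at $e \in \Lt(e)$ gives $a = ea = r_{a}(e) = r_{b}(e) = eb = b$.

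I don't expect any serious obstacle: the two preceding results \thmref{GRA_thm} and \lemref{every_action_hom__is_right_mult} do essentially all the work, and what remains is purely formal. The only mild subtlety is keeping track of the opposite category — making sure the direction of arrows and the order of composition agree with the convention that $\rho_{\alpha}, r_{\alpha}$ are \emph{right} multiplications, which naturally reverses the order present in $\mathcal{C}(S)$.
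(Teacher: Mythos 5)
Your proposal is correct and follows essentially the same route as the paper: the same functor ($e\mapsto\Lt(e)$, $C(\alpha)\mapsto r_{\alpha}$), the same use of \thmref{GRA_thm} for well-definedness and of \lemref{every_action_hom__is_right_mult} for surjectivity on hom-sets, and the same injectivity argument by evaluating at $\alpha^{+}$. The only (cosmetic) difference is that you phrase the map as a covariant functor out of $\mathcal{C}(S)^{\op}$, whereas the paper writes it as a contravariant functor out of $\mathcal{C}(S)$.
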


\begin{proof}
Define $\mathcal{F}:\mathcal{C}(S)\to\mathcal{D}(S)$ on objects by
$\mathcal{F}(e)=\Lt(e)$ and on morphisms by $\mathcal{F}(C(\alpha))=r_{\alpha}$.
First observe that $\mathcal{F}(C(e))=r_{e}$ is the identity function
on $\Lt(e)$ for every $e\in E$. Note also that the domain $C(\alpha)$
is $\alpha^{\ast}$ and its range is $\alpha^{+}$ while $r_{\alpha}:\Lt(\alpha^{+})\to\Lt(\alpha^{\ast}).$
In particular, if $\alpha^{+}=\beta^{\ast}$ for $\alpha,\beta\in S$
then
\[
r_{\alpha}r_{\beta}=r_{\beta\alpha}
\]
-composing functions right to left. Moreover, from the definition
of $\mathcal{C}(S)$ we have 
\[
C(\beta)C(\alpha)=C(\beta\alpha)
\]
 so 
\[
\mathcal{F}(C(\beta)C(\alpha))=\mathcal{F}(C(\beta\alpha))=r_{\beta\alpha}=r_{\alpha}r_{\beta}=\mathcal{F}(C(\alpha))\mathcal{F}(C(\beta))
\]
and $\mathcal{F}$ is indeed a contravariant functor. The functor
$\mathcal{F}$ is clearly a bijection on objects. To see that $\mathcal{F}$
is injective on hom-sets, consider two morphisms $C(\alpha),C(\alpha^{\prime})$
with domain $\alpha^{\ast}$ and range $\alpha^{+}$ such that $r_{\alpha}=\mathcal{F}(C(\alpha))=\mathcal{F}(C(\alpha^{\prime}))=r_{\alpha^{\prime}}$.
Then 
\[
\alpha=\alpha^{+}\alpha=r_{\alpha}(\alpha^{+})=r_{\alpha^{\prime}}(\alpha^{+})=\alpha^{+}\alpha^{\prime}=\alpha^{\prime}
\]
hence $C(\alpha)=C(\alpha^{\prime}).$ Finally, \lemref{every_action_hom__is_right_mult}
shows that $\mathcal{F}$ is onto on hom-sets and therefore it is
an isomorphism between $\mathcal{C}(S)^{\op}$ and $\mathcal{D}(S)$. 
\end{proof}

\subsection{\label{subsec:Modules}The category $\mathcal{C}(S)$ as a discrete
Peirce decomposition }

Let $S$ be a semigroup and let $\Bbbk$ be a field. For every set
$X$ we denote by $\Bbbk X$ the $\Bbbk$-vector space of all formal
finite linear combinations of elements of $X$
\[
\text{\ensuremath{\Bbbk X=}}\{k_{1}x_{1}+\ldots+k_{n}x_{n}\mid k_{i}\in\Bbbk,\quad x_{i}\in X\}.
\]
Assume $X$ is a partial $S$-action. Here it will be convenient to
denote this partial action by $s\bullet x$ for $s\in S$ and $x\in X$.
The $\Bbbk$-vector space $\Bbbk X$ has the structure of a $\Bbbk S$-module
according to 
\[
s\cdot x=\begin{cases}
s\bullet x & s\bullet x\text{ is defined}\\
0 & s\bullet x\text{ is undefined}
\end{cases}
\]

for $s\in S$ and $x\in X$, and extending linearly.  Let $X,Y$ be
two partial $S$-actions. It is clear that every function $f:X\to Y$
can be extended into a linear transformation $f:\Bbbk X\to\Bbbk Y$
which we denote also by $f$ for the sake of simplicity. It is a routine
to verify that $f:X\to Y$ is a homomorphism of partial $S$-actions
if and only if the extended function $f:\Bbbk X\to\Bbbk Y$ is a homomorphism
of $\Bbbk S$-modules. 

Now we specify to our case. Fix $S$ to be a reduced $E$-Fountain
semigroup which satisfies the congruence condition. The $\Bbbk$-vector
space $\Bbbk\Lt(e)$ has the structure of a $\Bbbk S$-module according
to 
\[
s\cdot x=\begin{cases}
sx & sx\in\Lt(e)\\
0 & sx\notin\Lt(e)
\end{cases}
\]
for $s\in S$ and $x\in\Lt(e)$, and extending linearly. We can extend
$r_{\alpha}$ to a linear transformation
\[
r_{\alpha}:\Bbbk\Lt(\alpha^{+})\to\Bbbk\Lt(\alpha^{\ast})
\]
which is a homomorphism of $\Bbbk S$-modules if and only if $r_{\alpha}:\Lt(\alpha^{+})\to\Lt(\alpha^{\ast})$
is a homomorphism of partial $S$-actions. Therefore, \thmref{GRA_thm}
implies the following corollary.
\begin{cor}
\label{cor:GRA_equivalent_KS_modules}Let $S$ be a reduced $E$-Fountain
semigroup which satisfies the congruence condition. The semigroup
$S$ satisfies the generalized right ample condition if and only if
$\mbox{\ensuremath{r_{\alpha}:\Bbbk\Lt(\alpha^{+})\to\Bbbk\Lt(\alpha^{\ast})}}$
is a $\Bbbk S$-module homomorphism for every $\alpha\in S$.
\end{cor}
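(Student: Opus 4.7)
The plan is to invoke Theorem~\ref{thm:GRA_thm} and reduce the corollary to the remark flagged in the sentence preceding its statement, namely the equivalence between $r_{\alpha}:\Lt(\alpha^{+})\to\Lt(\alpha^{\ast})$ being a homomorphism of partial left $S$-actions and its linear extension $r_{\alpha}:\Bbbk\Lt(\alpha^{+})\to\Bbbk\Lt(\alpha^{\ast})$ being a $\Bbbk S$-module homomorphism. Once this equivalence is established for every $\alpha\in S$, the corollary follows immediately by chaining it with Theorem~\ref{thm:GRA_thm}.

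For the forward implication (partial action homomorphism $\Rightarrow$ module homomorphism), I would fix $s\in S$ and a basis element $x\in\Lt(\alpha^{+})$ and split into two cases according to whether $sx\in\Lt(\alpha^{+})$. When $sx\in\Lt(\alpha^{+})$, the partial action homomorphism property gives $sx\alpha\in\Lt(\alpha^{\ast})$, so in both the module and the partial action the identity $r_{\alpha}(s\cdot x)=sx\alpha=s\cdot r_{\alpha}(x)$ holds literally. When $sx\notin\Lt(\alpha^{+})$, the left-hand side is $r_{\alpha}(0)=0$, and by the contrapositive of the partial action homomorphism condition one has $sx\alpha\notin\Lt(\alpha^{\ast})$, so $s\cdot r_{\alpha}(x)=s\cdot(x\alpha)=0$ in the module. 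Linearity in $x$ then extends the identity to all of $\Bbbk\Lt(\alpha^{+})$, and by definition a linear map commuting with each $s\in S$ is a $\Bbbk S$-module homomorphism.

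For the converse, one simply evaluates the module identity $r_{\alpha}(s\cdot x)=s\cdot r_{\alpha}(x)$ on a basis vector $x\in\Lt(\alpha^{+})$. Both sides lie in $\Bbbk\Lt(\alpha^{\ast})$, and since the two module actions use the zero convention to encode ``undefined,'' comparing whether each side vanishes recovers the equivalence $sx\in\Lt(\alpha^{+})\iff sx\alpha\in\Lt(\alpha^{\ast})$, while comparing the nonzero values recovers $r_{\alpha}(sx)=s\,r_{\alpha}(x)$. This is precisely the defining condition of a homomorphism of partial left $S$-actions.

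There is no real obstacle here; the only subtlety is bookkeeping between the ``undefined'' convention at the set level and the zero convention at the module level. Given that the proposition is just the linearization of Theorem~\ref{thm:GRA_thm}, combining the two equivalences (generalized right ample $\iff$ each $r_{\alpha}$ a partial action homomorphism $\iff$ each $r_{\alpha}$ a $\Bbbk S$-module homomorphism) yields the corollary.
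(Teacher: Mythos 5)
Your proposal is correct and matches the paper's approach exactly: the paper also dismisses the set-level/module-level equivalence as routine and then derives the corollary directly from Theorem~\ref{thm:GRA_thm}. Your case analysis just spells out the bookkeeping the paper leaves implicit.
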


Let $S$ be a reduced $E$-Fountain semigroup which satisfies the
congruence and the generalized right ample conditions. In view of
\corref{Category_isomomrphis} and \corref{GRA_equivalent_KS_modules}
we can identify the category $\mathcal{C}(S)^{\op}$ with a category
whose objects are left modules of the form $\Bbbk\Lt(e)$ and whose
morphisms are certain $\Bbbk S$-module homomorphisms between them,
but it is not true that every $\Bbbk S$-module homomorphism $F:\Bbbk\Lt(e)\to\Bbbk\Lt(f)$
is of the form $F=r_{\alpha}$ where $\alpha^{+}=e$ and $\alpha^{\ast}=f$.
However, we will see later that under certain conditions, every $\Bbbk S$-module
homomorphism is a linear combination of functions of this form.

From now we fix $S$ to be a \emph{finite} reduced $E$-Fountain semigroup
which satisfies the congruence condition. Some of the results in the
sequel are true also for infinite semigroups with weaker finiteness
conditions but for simplicity we prefer to deal only with the finite
case. Our goal in this section is to obtain a version of \corref{Category_isomomrphis}
for a linear category of $\Bbbk S$-modules. 

Recall that the relation $\trianglelefteq_{l}$ is defined on $S$
by the rule that $a\trianglelefteq_{l}b$ if and only if $a=be$ for
some $e\in E$. It is proved in \cite[Lemma 3.5]{Stein2021} that
$a\trianglelefteq_{l}b$ if and only if $a=ba^{\ast}$ and therefore
if $c\trianglelefteq_{l}b$ with $c^{\ast}=e$ then $c=be$.

From now on we assume that $S$ satisfies the generalized right ample
identity and $\trianglelefteq_{l}$ is contained in a partial order
so we can use the isomorphism $\Bbbk S\simeq\Bbbk\mathcal{C}(S)$
from \thmref{iso_theorem} freely. For instance, we can assume that
$\Bbbk S$ has a unit element because the category algebra $\Bbbk\mathcal{C}(S)$
is unital.

\thmref{iso_theorem} allows us to obtain some information on modules
of $\Bbbk S$. It is clear that if $M$ is a left $\Bbbk\mathcal{C}(S)$-module
it can also be viewed as a $\Bbbk S$-module according to 
\[
s\star m=\varphi(s)\cdot m=\sum_{t\trianglelefteq_{l}s}C(t)\cdot m
\]

for $s\in S$ and $m\in M$, where $\varphi$ is as defined in \thmref{iso_theorem}.
For every $e\in E$, the element $C(e)\in\Bbbk\mathcal{C}(S)$ is
an idempotent so $\Bbbk\mathcal{C}(S)\cdot C(e)$ is a projective
$\Bbbk\mathcal{C}(S)$-module. Therefore it is also a projective $\Bbbk S$-module
according to the linear extension of 
\[
s\star C(x)=\varphi(s)\cdot C(x)=\sum_{t\trianglelefteq_{l}s}C(t)\cdot C(x)
\]
where $s\in S$ and $C(x)$ is a morphism in $\mathcal{C}(S)$ whose
domain is $e$. Note that for every $e\in E$ both the domain and
range of $C(e)$ are the object $e$ so $C(e)C(f)=0$ for distinct
$e,f\in E$. Since ${\displaystyle \sum_{e\in E}C(e)}$ is the unit
element of $\Bbbk\mathcal{C}(S)$ we deduce that $\{C(e)\mid e\in E\}$
is a complete set of orthogonal idempotents for $\Bbbk\mathcal{C}(S)$
so we have a decomposition 
\[
\Bbbk S\simeq\Bbbk\mathcal{C}(S)\simeq\bigoplus_{e\in E}\Bbbk\mathcal{C}(S)\cdot C(e)
\]
as $\Bbbk S$-modules.

The next theorem is a generalization of \cite[Proposition 6.6]{Margolis2021}.
\begin{thm}
\label{thm:Projective_modules}Let $S$ be a finite reduced $E$-Fountain
semigroup which satisfies the congruence condition and the generalized
right ample identity. Assume that $\trianglelefteq_{l}$ is contained
in a partial order. For every $e\in E$ there is an isomorphism $\text{\mbox{\ensuremath{\Bbbk\Lt(e)\simeq\Bbbk\mathcal{C}(S)\cdot C(e)}}}$
of $\Bbbk S$-modules. In particular, $\Bbbk\Lt(e)$ is projective
for every $e\in E$ and $\mbox{\ensuremath{\Bbbk S\simeq{\displaystyle \bigoplus_{e\in E}}\Bbbk\Lt(e)}}$
as $\Bbbk S$-modules.
\end{thm}

\begin{proof}
The second statement follows from the first. To prove the first statement,
define 
\[
\mbox{\ensuremath{\Phi:\Bbbk\Lt(e)\to\Bbbk\mathcal{C}(S)\cdot C(e)}}
\]
 on basis elements by 
\[
\Phi(x)=C(x).
\]
Note that $x\in\Lt(e)$ if and only if $x^{\ast}=e$ which means that
the domain of $C(x)$ is $e$. Therefore, $\Phi$ is a well-defined
isomorphism of $\Bbbk$-vector spaces. It is left to show that for
every $s\in S$
\[
\Phi(s\cdot x)=s\star\Phi(x).
\]
\begin{casenv}
\item If $sx\in\Lt(e)$ then $(s^{\ast}x)^{\ast}=(sx)^{\ast}=x^{\ast}.$
Choose $a=x$, $e=s^{\ast}$ so the generalized right ample condition
implies that
\[
\left(s^{\ast}\left(x\left(s^{\ast}x\right)^{\ast}\right)^{+}\right)^{\ast}=\left(x\left(s^{\ast}x\right)^{\ast}\right)^{+}.
\]
Plugging $(s^{\ast}x)^{\ast}=x^{\ast}$ we obtain 
\[
\left(s^{\ast}\left(xx^{\ast}\right)^{+}\right)^{\ast}=\left(xx^{\ast}\right)^{+}
\]
so
\[
(s^{\ast}x^{+})^{\ast}=x^{+}
\]
and the congruence condition implies 
\[
(sx^{+})^{\ast}=(s^{\ast}x^{+})^{\ast}=x^{+}.
\]
Now 
\[
s\star\Phi(x)=\sum_{t\trianglelefteq_{l}s}C(t)\cdot C(x).
\]
For every $t$ in this summation, $t\trianglelefteq_{l}s$ implies
$t=st^{\ast}$. If $t\neq sx^{+}$ then $t^{\ast}\neq x^{+}$ so $C(t)\cdot C(x)=0$.
Therefore,
\[
s\star\Phi(x)=\sum_{t\trianglelefteq_{l}s}C(t)\cdot C(x)=C(sx^{+})\cdot C(x).
\]
We have already seen that $(sx^{+})^{\ast}=x^{+}$ so we obtain 
\[
C(sx^{+})\cdot C(x)=C(sx^{+}x)=C(sx)=\Phi(sx)=\Phi(s\cdot x).
\]
This proves that
\[
s\star\Phi(x)=\Phi(s\cdot x)
\]
 as required.
\item If $sx\notin\Lt(e)$ then $s\cdot x=0$. This happens when $(sx)^{\ast}\neq x^{\ast}$.
Assume that $s\star\Phi(x)\neq0$. Since $s\star\Phi(x)={\displaystyle \sum_{t\trianglelefteq_{l}s}}C(t)\cdot C(x)$
there exists a $t$ such that $t\trianglelefteq_{l}s$ and $t^{\ast}=x^{+}$.
This implies that $t=sx^{+}$ so $(sx^{+})^{\ast}=x^{+}$. However,
\[
(sx)^{\ast}=(sx^{+}x)^{\ast}=((sx^{+})^{\ast}x)^{\ast}=(x^{+}x)^{\ast}=x^{\ast}
\]
which is a contradiction. Therefore, 
\[
s\star\Phi(x)=0=\Phi(s\cdot x)
\]
also in this case. 
\end{casenv}
\end{proof}
\begin{cor}
\label{cor:Basis_of_hom_sets}With the same assumptions as in \thmref{Projective_modules}.
For every $e,f\in E$, the set 
\[
B=\{r_{\alpha}\mid\alpha\in S,\quad a^{+}=e,\quad\alpha^{\ast}=f\}
\]
 is a basis for the $\Bbbk$-vector space $\Hom_{\Bbbk S}(\Bbbk\Lt(e),\Bbbk\Lt(f))$.
\end{cor}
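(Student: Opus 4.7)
The plan is to prove that $B$ is linearly independent and then match its cardinality with the dimension of $\Hom_{\Bbbk S}(\Bbbk\Lt(e),\Bbbk\Lt(f))$, which together force $B$ to be a basis.

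For linear independence, I would take a relation $\sum_{i} c_i r_{\alpha_i}=0$ where the $\alpha_i$ are pairwise distinct elements of $S$ with $\alpha_i^{+}=e$ and $\alpha_i^{\ast}=f$, and evaluate both sides at the element $e\in\Lt(e)$. Since $e\alpha_i=\alpha_i^{+}\alpha_i=\alpha_i$, this yields $\sum_i c_i\alpha_i=0$ in $\Bbbk\Lt(f)$. The elements $\alpha_i$ lie in $\Lt(f)$ (as $\alpha_i^{\ast}=f$) and are distinct basis vectors there, so every $c_i$ must vanish. Note also that the assignment $\alpha\mapsto r_\alpha$ is injective on the set $\{\alpha\in S\mid\alpha^+=e,\alpha^\ast=f\}$ by the same argument already used in the proof of \corref{Category_isomomrphis}, so the cardinality of $B$ equals the cardinality of that indexing set.

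For the dimension count, I would use \thmref{Projective_modules} to identify $\Bbbk\Lt(e)$ with the projective $\Bbbk\mathcal{C}(S)$-module $\Bbbk\mathcal{C}(S)\cdot C(e)$, and similarly for $f$. Since $\Bbbk S\simeq\Bbbk\mathcal{C}(S)$ as algebras via \thmref{iso_theorem}, this gives
\[
\Hom_{\Bbbk S}(\Bbbk\Lt(e),\Bbbk\Lt(f))\simeq\Hom_{\Bbbk\mathcal{C}(S)}(\Bbbk\mathcal{C}(S)\cdot C(e),\,\Bbbk\mathcal{C}(S)\cdot C(f)).
\]
Since $C(e)$ and $C(f)$ are orthogonal idempotents in $\Bbbk\mathcal{C}(S)$ (the identity morphisms are orthogonal in any category algebra), the standard identification $\Hom_A(Ax,Ay)\simeq xAy$ for idempotents $x,y$ in a unital algebra $A$ reduces the right-hand side to $C(e)\cdot\Bbbk\mathcal{C}(S)\cdot C(f)$. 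The latter has basis $\{C(\alpha)\mid \alpha\in S,\ \alpha^{+}=e,\ \alpha^{\ast}=f\}$, so
\[
\dim_{\Bbbk}\Hom_{\Bbbk S}(\Bbbk\Lt(e),\Bbbk\Lt(f))=|\{\alpha\in S\mid\alpha^{+}=e,\ \alpha^{\ast}=f\}|=|B|.
\]
Combining this with linear independence, $B$ is a basis as required.

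The main obstacle will be the bookkeeping around domain/range conventions: recall that $C(\alpha)$ has domain $\alpha^{\ast}$ and range $\alpha^{+}$, while $r_\alpha$ goes from $\Lt(\alpha^{+})$ to $\Lt(\alpha^{\ast})$, and \corref{Category_isomomrphis} identifies $\mathcal{C}(S)^{\op}$ with $\mathcal{D}(S)$ contravariantly. One has to verify that the idempotent $C(e)$ picks out morphisms with $\alpha^{+}=e$ on the correct side when multiplied by $C(f)$ on the other side, which is the routine but easy-to-get-wrong part of the argument.
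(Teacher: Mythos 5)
Your proposal is correct and follows essentially the same route as the paper: linear independence by evaluating at $e\in\Lt(e)$ (using $r_{\alpha_i}(e)=e\alpha_i=\alpha_i$ and the linear independence of distinct elements of $\Lt(f)$ in $\Bbbk\Lt(f)$), combined with the dimension count $\dim_{\Bbbk}\Hom_{\Bbbk S}(\Bbbk\Lt(e),\Bbbk\Lt(f))=\dim_{\Bbbk}C(e)\,\Bbbk\mathcal{C}(S)\,C(f)=|\{\alpha\in S\mid\alpha^{+}=e,\ \alpha^{\ast}=f\}|$ via \thmref{Projective_modules} and \thmref{iso_theorem}. Your extra remark on the injectivity of $\alpha\mapsto r_{\alpha}$ is a small point the paper leaves implicit, and your domain/range bookkeeping for $C(e)\cdot C(\alpha)\cdot C(f)$ checks out.
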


\begin{proof}
\corref{GRA_equivalent_KS_modules} implies that $r_{\alpha}\in\Hom_{\Bbbk}(\Bbbk\Lt(e),\Bbbk\Lt(f))$
if $\alpha^{+}=e$ and $\alpha^{\ast}=f$. Let $B=\{r_{\alpha_{1}},\ldots,r_{\alpha_{l}}\}$
where all the listed elements are distinct. To show that $B$ is linearly
independent, assume 
\[
k_{1}r_{\alpha_{1}}+\ldots+k_{l}r_{\alpha_{l}}=0
\]
 so in particular 
\[
k_{1}r_{\alpha_{1}}(e)+\ldots+k_{l}r_{\alpha_{l}}(e)=0.
\]
Since $r_{\alpha_{i}}(e)=e\alpha_{i}=\alpha_{i}$, we obtain
\[
k_{1}\alpha_{1}+\ldots+k_{l}\alpha_{l}=0
\]
which implies $k_{1}=\ldots=k_{l}=0$ since $\{\alpha_{1},\ldots,\alpha_{l}\}$
is a linearly independent set in $\Bbbk\Lt(f)$. Now, according to
\thmref{Projective_modules} we know that 
\begin{align*}
\dim_{\Bbbk}\Hom_{\Bbbk S}(\Bbbk\Lt(e),\Bbbk\Lt(f)) & =\dim_{\Bbbk}\Hom_{\Bbbk S}(\Bbbk\mathcal{C}(S)\cdot C(e),\Bbbk\mathcal{C}(S)\cdot C(f))\\
 & =\dim_{\Bbbk}C(e)\Bbbk\mathcal{C}(S)C(f)
\end{align*}
and this is precisely the number of $\alpha\in S$ such that $\alpha^{+}=e$
and $\alpha^{\ast}=f$. In conclusion, $\{r_{\alpha}\mid\alpha\in S,\quad a^{+}=e,\quad\alpha^{\ast}=f\}$
is a linearly independent set whose size is the dimension of $\Hom_{\Bbbk S}(\Bbbk\Lt(e),\Bbbk\Lt(f))$
and therefore it is a basis.
\end{proof}
Set $\mathcal{L}(\Bbbk S)$ to be the linear category (identified
with a Peirce decomposition of $\Bbbk S$) whose set of objects are
the projective modules of the form $\Bbbk\Lt(e)$ for $e\in E$ and
\[
\mathcal{L}(\Bbbk S)(\Bbbk\Lt(e),\Bbbk\Lt(f))=\Hom_{\Bbbk S}(\Bbbk\Lt(e),\Bbbk\Lt(f))
\]
 for every $e,f\in E$.
\begin{thm}
\label{thm:Pierce_decomposition}With the same assumptions as in \thmref{Projective_modules}.
There is an isomorphism of linear categories $\Bbbk[\mathcal{C}(S)]^{\op}\simeq\mathcal{L}(\Bbbk S)$. 
\end{thm}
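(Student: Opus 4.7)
The plan is to linearize the category isomorphism $\mathcal{F}\colon \mathcal{C}(S)^{\op} \to \mathcal{D}(S)$ produced in \corref{Category_isomomrphis} and check that, once extended by linearity, it lands precisely in $\mathcal{L}(\Bbbk S)$ and is still bijective on objects and on hom-sets. Concretely, I would define a functor $\widetilde{\mathcal{F}}\colon \Bbbk[\mathcal{C}(S)]^{\op} \to \mathcal{L}(\Bbbk S)$ by $\widetilde{\mathcal{F}}(e) = \Bbbk\Lt(e)$ on objects and by linearly extending $\widetilde{\mathcal{F}}(C(\alpha)) = r_{\alpha}$ on morphisms. Note that $\Bbbk[\mathcal{C}(S)]^{\op}$ coincides with $\Bbbk[\mathcal{C}(S)^{\op}]$ since passing to the opposite category and freely extending scalars to $\Bbbk$ commute.

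The first check is that $\widetilde{\mathcal{F}}$ indeed takes values in $\mathcal{L}(\Bbbk S)$. A morphism from $e$ to $f$ in $\Bbbk[\mathcal{C}(S)]^{\op}$ is a $\Bbbk$-linear combination of basis elements $C(\alpha)$ with $\alpha^{+}=e$ and $\alpha^{\ast}=f$, and by \corref{GRA_equivalent_KS_modules} each $r_{\alpha}\colon \Bbbk\Lt(\alpha^{+}) \to \Bbbk\Lt(\alpha^{\ast})$ is a $\Bbbk S$-module homomorphism. Since the hom-set $\mathcal{L}(\Bbbk S)(\Bbbk\Lt(e),\Bbbk\Lt(f)) = \Hom_{\Bbbk S}(\Bbbk\Lt(e),\Bbbk\Lt(f))$ is a $\Bbbk$-vector space, the linear combination stays inside this hom-set.

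Next I would verify functoriality. On the underlying category morphisms, composition compatibility is already known from \corref{Category_isomomrphis}: for $\alpha,\beta \in S$ with $\alpha^{+}=\beta^{\ast}$ we have $r_{\alpha}\circ r_{\beta} = r_{\beta\alpha}$, which matches the op-composition rule $C(\alpha)\cdot_{\op} C(\beta) = C(\beta\alpha)$. Identity morphisms are sent to identity morphisms because $r_{e}$ is the identity on $\Lt(e)$. Because composition is bilinear in both $\Bbbk[\mathcal{C}(S)]^{\op}$ and in $\mathcal{L}(\Bbbk S)$ (in the latter, composition of $\Bbbk S$-module maps is $\Bbbk$-bilinear), this extends immediately to all formal $\Bbbk$-linear combinations, so $\widetilde{\mathcal{F}}$ is a $\Bbbk$-linear functor.

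Finally, I would show $\widetilde{\mathcal{F}}$ is an isomorphism of $\Bbbk$-linear categories. It is a bijection on objects by construction. On hom-sets, the set $\{C(\alpha)\mid \alpha\in S,\ \alpha^{+}=e,\ \alpha^{\ast}=f\}$ is by definition a $\Bbbk$-basis of $\Bbbk[\mathcal{C}(S)]^{\op}(e,f)$, and \corref{Basis_of_hom_sets} states that its image $\{r_{\alpha}\mid \alpha^{+}=e,\ \alpha^{\ast}=f\}$ is a $\Bbbk$-basis of $\Hom_{\Bbbk S}(\Bbbk\Lt(e),\Bbbk\Lt(f))$. Therefore $\widetilde{\mathcal{F}}$ restricts to a $\Bbbk$-linear isomorphism on each hom-set, completing the proof. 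There is no substantial obstacle here: all the heavy lifting has already been done in \corref{Category_isomomrphis} (for functoriality and bijection on the underlying sets of morphisms) and in \corref{Basis_of_hom_sets} (to pass from generating sets of morphisms to bases of hom-spaces); the remaining step is only the bookkeeping of extending the functor $\Bbbk$-linearly.
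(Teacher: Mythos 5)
Your proposal is correct and follows essentially the same route as the paper: linearize the functor $\mathcal{F}(e)=\Bbbk\Lt(e)$, $\mathcal{F}(C(\alpha))=r_{\alpha}$, cite \corref{Category_isomomrphis} for contravariant functoriality, and cite \corref{Basis_of_hom_sets} to see that each hom-set maps basis to basis. Your write-up merely spells out the bookkeeping (well-definedness via \corref{GRA_equivalent_KS_modules} and bilinearity of composition) that the paper leaves implicit.
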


\begin{proof}
Define a functor of $\Bbbk$-linear categories $\mathcal{F}:\Bbbk[\mathcal{C}(S)]\to\mathcal{L}(\Bbbk S)$
in the following way. On objects, $\mathcal{F}(e)=\Bbbk\Lt(e)$ and
$\mathcal{F}(C(\alpha))=r_{\alpha}$ defines $\mathcal{F}$ on the
bases of the hom-sets. The argument in \corref{Category_isomomrphis}
shows that $\mathcal{F}$ is a contravariant functor of $\Bbbk$-linear
categories. It is clearly a bijection on objects and \corref{Basis_of_hom_sets}
shows that $\mathcal{F}$ is an isomorphism of hom-sets.
\end{proof}
\thmref{Pierce_decomposition} shows that up to taking the opposite
category, $\Bbbk[\mathcal{C}(S)]$ is isomorphic to a Peirce decomposition
of $\Bbbk S$. The key fact is in the proof of \corref{Basis_of_hom_sets}.
If the generalized right ample identity holds, all homomorphisms between
the projective modules $\Bbbk\Lt(e)$ and $\Bbbk\Lt(f)$ are linear
combinations of functions of the form $r_{\alpha}$ and only for elements
$\alpha\in S$ which satisfy $\alpha^{+}=e$ and $\alpha^{\ast}=f$.
Therefore $\mathcal{L}(\Bbbk S)$ is (the opposite of) the linearization
of $\mathcal{C}(S)$ itself. In fact, we can think of the associated
category $\mathcal{C}(S)$ as being a ``discrete'' Peirce decomposition
of the semigroup $S$ itself.

\section{\label{sec:Examples}Examples}

A good way to show that the generalized right ample condition is a
natural property is to give some natural examples of semigroups which
satisfy it. If the set of idempotents $E$ is a subband of $S$, it
is shown in \cite{Stein2021} that the generalized right ample identity
reduces to the well-studied ``standard'' right ample identity. So
we are interested in examples where $E$ is not a subband of $S$.
One important example - the Catalan monoid - was already given in
\cite{Stein2021} (and will be recalled in the sequel as well).

In this section we give two additional examples of such semigroups.

\subsection{Linear operators on a Hilbert space \protect\footnote{This example is part of the ArXiv version of \cite{Stein2021} but
was omitted from the final paper due to referee's request.}}

Let $H$ be a Hilbert space and let $B(H)$ be the algebra of all
bounded linear operators on $H$ (this is one of the main examples
of a Rickart $\ast$-ring and a Baer $\ast$-ring, see \cite{berberian1988baer,Stokes2015}).
For every closed subspace $U\subseteq H$ we associate the orthogonal
projection $P_{U}\in B(H)$ onto it. Recall that $P_{U}$ is an idempotent
and $\im(P_{U})=U=\ker(P_{U})^{\perp}$ (where $\im T$ is the image
of the linear operator $T$, $\ker(T)=\{h\in H\mid T(h)=0\}$ is the
kernel of the linear operator $T$ and $V^{\perp}$ is the orthogonal
complement of $V$). It is easy to check that 
\[
P_{U}P_{V}=P_{U}\iff P_{V}P_{U}=P_{U}\iff U\subseteq V.
\]
Now, consider $B(H)$ as a multiplicative monoid. This is an example
of a Baer $\ast$-semigroup \cite{Foulis1960}. Let $T\in B(H)$ and
consider the restriction to $(\ker T)^{+}$
\[
T_{r}:(\ker T)^{+}\to\im T.
\]
Since every $h\in H$ can be written uniquely as $h=h_{1}+h_{2}$
where $h_{1}\in(\ker T)^{+}$ and $h_{2}\in\ker T$ we deduce that
$T_{r}$ is an isomorphism and $T$ can be derived from $T_{r}$ by
\[
T(h)=T_{r}(h_{1}).
\]
Now consider the inverse $T_{r}^{-1}:\im T\to(\ker T)^{+}$ and extend
it to a linear transformation $\mbox{\ensuremath{R:H\to H}}$, so
$\im R=(\ker T)^{+}$ and $\ker R=(\im T)^{+}$. It is possible to
verify that $T_{r}^{-1}$ is bounded (\cite[Corollary 2.12]{Rudin1991})
and therefore $R$ is bounded as well. It is also clear that $TRT=T$
so $B(H)$ is a regular semigroup. Being a regular subsemigroup of
the semigroup $L(H)$ of all linear operators on $H$, it inherits
the $\Rc$ and $\Lc$ equivalences from the semigroup $L(H)$ (see
and \cite[Proposition 2.4.2]{Howie1995}) and the $\Rc$ and $\Lc$
relations on $L(H)$ are well-known \cite[Excercise 19 on page 63]{Howie1995}.
Therefore we obtain:
\begin{lem}
For every $T_{1},T_{2}\in B(H)$ we have 
\[
T_{1}\,\Lc\,T_{2}\iff\ker(T_{1})=\ker(T_{2}),\qquad T_{1}\,\Rc\,T_{2}\iff\im(T_{1})=\im(T_{2}).
\]
\end{lem}

In particular, 
\[
T\,\Lc\,P_{(\ker T)^{\perp}},\quad T\,\Rc\,P_{\im T}
\]
for every $T\in B(H)$. Now, if we set $E=\{P_{U}\mid U\text{ is a closed subspace of }H\}$
then $\Lc\subseteq\Lt$ and $\Rc\subseteq\Rt$ implies 
\[
T\,\Lt\,P_{(\ker T)^{\perp}},\quad T\,\Rt\,P_{\im T}.
\]
Therefore, $B(H)$ is a reduced $E$-Fountain semigroup and note that
in general $P_{U}P_{V}\neq P_{V}P_{U}$ so $E$ is not a subband of
$B(H)$. In fact, since every $T\in B(H)$ is $\Lc$-equivalent to
an idempotent and $\Rc$-equivalent to an idempotent of $E$ we deduce
that $\Lt=\Lc$ and $\Rt=\Rc$ so this is a very special case of a
reduced $E$-Fountain semigroup. This allows us to prove the rest
of the properties easily. The relation $\Lc$ is a right congruence
and $\Rc$ is a left congruence (\cite[Proposition 2.1.2]{Howie1995})
so we deduce immediately that $B(H)$ satisfies the congruence condition.
Furthermore, \lemref{Greens_lemma} implies that $B(H)$ satisfies
both the right and left generalized ample identities.
\begin{rem}
The reader might be disappointed by an example where $\Lt=\Lc$ and
$\Rt=\Rc$ because in this case the congruence and the generalized
ample conditions reduce to known facts on Green's relations (and this
is also the situation in the next example in this section). However,
our conditions are equational identities so it is possible to find
more examples by considering subsemigroups which contain the set $E$.
For instance, the subsemigroup $\langle E\rangle$ of $B(H)$ generated
by the projections $P_{U}$ is another reduced $E$-Fountain semigroup
which satisfies the congruence condition and the generalized ample
conditions. It is possible to check (but we will not give the details)
that in this semigroup the containments $\Lc\subset\Lt$ and $\Rc\subset\Rt$
are strict.
\end{rem}

\subsection{Order-preserving functions with a fixed point }

Let $\T_{n}$ be the monoid of all functions $f:[n]\to[n]$ (where
$[n]=\{1,\ldots,n\}$). A function is called \emph{order-preserving}
if $i_{1}\leq i_{2}$ implies $f(i_{1})\leq f(i_{2})$. Denote by
$\Op_{n}$ the monoid of all order-preserving functions $f:[n]\to[n]$.
We denote by $\OF_{n}$ the monoid of all order-preserving functions
$f:[n]\to[n]$ with the fixed point $f(n)=n$. 
\begin{example}
\label{exa:Example_OF2}The monoid $\OF_{3}$ consists of the following
$6$ elements:
\[
\left(\begin{array}{ccc}
1 & 2 & 3\\
1 & 2 & 3
\end{array}\right),\left(\begin{array}{ccc}
1 & 2 & 3\\
1 & 3 & 3
\end{array}\right),\left(\begin{array}{ccc}
1 & 2 & 3\\
2 & 3 & 3
\end{array}\right)
\]
\[
\left(\begin{array}{ccc}
1 & 2 & 3\\
1 & 1 & 3
\end{array}\right),\left(\begin{array}{ccc}
1 & 2 & 3\\
2 & 2 & 3
\end{array}\right),\left(\begin{array}{ccc}
1 & 2 & 3\\
3 & 3 & 3
\end{array}\right)
\]
\end{example}

Clearly, $\OF_{n}$ is a submonoid of $\Op_{n}$. Note the analogy
with the definition of the monoid $\PT_{n}$ of all partial functions
on an $n$-element set. The monoid $\PT_{n}$ can be identified with
the submonoid of $\T_{n+1}$ which consists of all functions $f:[n+1]\to[n+1]$
such that $f(n+1)=n+1$.

To the author's knowledge, the monoid $\OF_{n}$ has not been studied
before. In this section we will discuss some properties of this monoid
with emphasis on its structure as a reduced $E$-Fountain semigroup.

Let $f\in\OF_{n}$. Recall that the kernel of $f$ - $\ker(f)$ -
is the equivalence relation on its domain defined by $(i_{1},i_{2})\in\ker(f)$
if and only if $f(i_{1})=f(i_{2})$. Since $f$ is order-preserving,
the kernel classes of $f$ are \emph{intervals }- if $i_{1}\leq i_{2}\leq i_{3}$
and $(i_{1},i_{3})\in\ker(f)$ then $(i_{1},i_{2})\in\ker f$ also.

Let $K_{1},\ldots,K_{l+1}$ be the kernel classes of $f$ (where $0\leq l\leq n-1$).
Let $y_{i}=f(K_{i}$) and assume that the indices are arranged such
that $y_{1}<y_{2}<\cdots<y_{l+1}$. Choose $x_{i}$ to be the maximal
element of $K_{i}$. Since $f(n)=n$ it must be the case that $y_{l+1}=n$
and $x_{l+1}=n$. Now, set 
\[
X=\{x_{1},\ldots,x_{l}\},\quad Y=\{y_{1},\ldots,y_{l}\}
\]
so from every $f\in\OF_{n}$ we can extract two sets $X,Y\subseteq[n-1]$
such that $|X|=|Y|$. In the other direction, from two such sets $X=\{x_{1},\ldots,x_{l}\}$
and $\mbox{\ensuremath{Y=\{y_{1},\ldots,y_{l}\}}}$ we can retrieve
an $f$ by setting 
\begin{equation}
f(x)=\begin{cases}
y_{1} & x\leq x_{1}\\
y_{i} & x_{i-1}<x\leq x_{i}\quad(2\leq i\leq l)\\
n & x_{l}<x.
\end{cases}\label{eq:Def_of_OFn_element}
\end{equation}
As a conclusion, we observe that there is a one-to-one correspondence
between functions $f\in\OF_{n}$ and pairs of sets $X,Y\subseteq[n-1]$
such that $|X|=|Y|$. Note that $\im(f)=Y\cup\{n\}$ (where $\im(f)$
is the image of $f$) and $X$ contains the maximal elements of the
kernel classes of $f$ except $n$. From now on we denote by $f_{X,Y}$
the function associated with $X,Y\subseteq[n-1]$. For instance, the
functions from \exaref{Example_OF2} are
\[
f_{\{1,2\},\{1,2\}},f_{\{1\},\{1\}},f_{\{1\},\{2\}},f_{\{2\},\{1\}},f_{\{2\},\{2\}},f_{\varnothing,\varnothing}.
\]

\begin{cor}
The number of elements of $\OF_{n}$ is ${\displaystyle \sum_{k=0}^{n-1}\binom{n-1}{k}^{2}={2n-2 \choose n-1}}.$
\end{cor}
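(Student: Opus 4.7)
The plan is to use the bijection established in the paragraphs immediately preceding the corollary, which shows $f \leftrightarrow (X,Y)$ with $X,Y \subseteq [n-1]$ and $|X|=|Y|$, and then apply a standard binomial identity.

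First I would count the pairs $(X,Y)$ with $X,Y \subseteq [n-1]$ and $|X|=|Y|=k$. For each fixed $k \in \{0,1,\ldots,n-1\}$, there are $\binom{n-1}{k}$ choices for $X$ and independently $\binom{n-1}{k}$ choices for $Y$, giving $\binom{n-1}{k}^{2}$ pairs. Summing over $k$ and invoking the bijection $f \mapsto (X,Y)$ yields $|\OF_{n}| = \sum_{k=0}^{n-1}\binom{n-1}{k}^{2}$.

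For the second equality I would invoke the classical Vandermonde convolution $\sum_{k=0}^{m}\binom{m}{k}\binom{m}{m-k} = \binom{2m}{m}$, applied with $m=n-1$ and combined with the symmetry $\binom{n-1}{k} = \binom{n-1}{n-1-k}$, which turns $\binom{n-1}{k}^{2}$ into $\binom{n-1}{k}\binom{n-1}{n-1-k}$ and gives $\binom{2n-2}{n-1}$.

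There is no real obstacle here: all the work has already been done in establishing the $f \leftrightarrow (X,Y)$ bijection, and both steps (splitting by the common cardinality and applying Vandermonde) are routine. The only thing worth being careful about is the edge case $k=0$, which corresponds to $X=Y=\varnothing$ and the constant function $f(x)=n$; this is correctly counted by $\binom{n-1}{0}^{2}=1$, so the range $0 \le k \le n-1$ in the sum is correct.
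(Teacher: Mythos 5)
Your proposal is correct and is essentially the paper's own argument: the paper's proof is the single line ``we sum over all pairs $X,Y\subseteq[n-1]$ with the same size,'' which is exactly your count of $\binom{n-1}{k}^{2}$ pairs for each $k$ followed by the standard Vandermonde identity. No differences worth noting.
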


\begin{proof}
We sum over all pairs $X,Y\subseteq[n-1]$ with the same size. The
final equality follows from \cite[Example 1.1.17]{Stanley1997}.
\end{proof}
Another observation will be useful. A function $f:[n]\to[n]$ is called
\emph{order-increasing} if $i\leq f(i)$ for every $i\in[n]$. In
particular, if $f$ is order-increasing then $f(n)=n$. Let $\C_{n}$
be the submonoid of $\OF_{n}$ which consists of all order-preserving
and order-increasing functions $f:[n]\to[n]$. This monoid is called
the Catalan monoid because the size of $\C_{n}$ is the $n$-th Catalan
number \cite[Theorem 14.2.8]{Ganyushkin2009b}. It is well-known that
$\C_{n}$ is a $\Jc$-trivial monoid \cite[Proposition 17.17]{Steinberg2016}.
Given $X=\{x_{1},\ldots,x_{l}\}$ and $Y=\{y_{1},\ldots,y_{l}\}$
we say that $X\leq Y$ if $x_{i}\leq y_{i}$ for $1\leq i\leq l$.
It is easy to see that $f_{X,Y}\in\OF_{n}$ is order-increasing if
and only if $X\leq Y$ (see also \cite{Margolis2018A}). Therefore,
\[
\C_{n}=\{f_{X,Y}\in\OF_{n}\mid X\leq Y\}.
\]

In the rest of this section we consider several properties of the
monoid $\OF_{n}$ and its algebra.

\paragraph*{Idempotents}

Let $X=\{x_{1},\ldots,x_{l}\}$ and $Y=\{y_{1},\ldots,y_{l}\}$ be
two subsets of $[n-1]$. It is well-known (see \cite[Theorem 2.7.2]{Ganyushkin2009b})
that a function $f$ is an idempotent if $f(i)=i$ for every $i\in\im(f)$.
In view of (\ref{eq:Def_of_OFn_element}) and the fact that $\im(f_{X,Y})=Y\cup\{n\}$
it follows that $f_{X,Y}$ is an idempotent if and only if $y_{1}\leq x_{1}$
and $x_{i-1}<y_{i}\leq x_{i}$ for $2\leq i\leq l$.

\paragraph{Green's relations}

It is easy to verify that $f_{Y,Z}f_{X,Y}=f_{X,Z}$ so $f_{X,Y}f_{Y,X}f_{X,Y}=f_{X,Y}$
and therefore $\OF_{n}$ is a regular monoid. As a regular submonoid
of $\T_{n}$ it is easy to describe its $\Rc$ and $\Lc$ relations,
as explained in the previous example (see \cite[Proposition 2.4.2]{Howie1995}).
The $\Rc$ and $\Lc$ relations of $\T_{n}$ can be found in \cite[Excercise 16 on page 63]{Howie1995}.
\begin{lem}
\label{lem:R_L_Description}Let $f_{X,Y},f_{Z,W}\in\OF_{n}$, then
\begin{align*}
f_{X,Y}\,\Rc\,f_{Z,W} & \iff\im(f_{X,Y})=\im(f_{Z,W})\iff Y=W\\
f_{X,Y}\,\Lc\,f_{Z,W} & \iff\ker(f_{X,Y})=\ker(f_{Z,W})\iff X=Z.
\end{align*}
\end{lem}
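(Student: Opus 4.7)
The plan is to reduce the statement to the well-known descriptions of Green's $\Rc$ and $\Lc$ on the full transformation monoid $\T_n$ and then to translate each condition into the parametrization by the pairs of sets. First, I would invoke the standard fact (Howie, Proposition 2.4.2) that in $\T_n$ two elements are $\Rc$-related iff they have the same image, and $\Lc$-related iff they have the same kernel. Since $\OF_n$ is a regular submonoid of $\T_n$ --- this was observed just above the lemma by the decomposition $f_{X,Y} = f_{X,Y} f_{Y,X} f_{X,Y}$ --- a standard exercise (Howie, Exercise 16 on page 63) guarantees that Green's $\Rc$ and $\Lc$ computed inside $\OF_n$ coincide with their restrictions from $\T_n$. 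This immediately supplies the first equivalence in each of the two lines of the lemma.

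It then remains to translate the image and kernel equalities into statements about the parameters. For the image, by the very definition of the encoding one has $\im(f_{X,Y}) = Y \cup \{n\}$, since $Y$ was defined as $\im(f)\setminus\{n\}$ and $n\in\im(f)$ always (because $f(n)=n$). Hence $\im(f_{X,Y}) = \im(f_{Z,W})$ iff $Y\cup\{n\} = W\cup\{n\}$, and since $Y,W\subseteq[n-1]$ this is equivalent to $Y=W$.

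For the kernel, I would use that a function in $\OF_n$ is order-preserving, so every kernel class is an interval of $[n]$, and the partition of $[n]$ into intervals is uniquely determined by the set of maximal elements of the classes. By the construction preceding the lemma, the maximal elements of the kernel classes of $f_{X,Y}$ are exactly $X\cup\{n\}$. Consequently, $\ker(f_{X,Y}) = \ker(f_{Z,W})$ iff $X\cup\{n\} = Z\cup\{n\}$, which is equivalent to $X=Z$ since $X,Z\subseteq[n-1]$.

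The only point that requires slight care is the appeal to the fact that Green's $\Rc$ and $\Lc$ on a regular submonoid agree with the ambient ones; this is entirely standard, and I would merely cite it. Beyond that, the proof is a direct bookkeeping check with the bijection $f\leftrightarrow(X,Y)$, so no real obstacle is expected.
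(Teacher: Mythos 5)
Your proposal is correct and follows exactly the route the paper intends: the paper gives no explicit proof but justifies the lemma by the same two citations (Howie's description of $\Rc$ and $\Lc$ on $\T_{n}$ via images and kernels, and the fact that Green's relations on a regular submonoid are inherited), with the translation into the parameters $X,Y$ left implicit. Your bookkeeping of $\im(f_{X,Y})=Y\cup\{n\}$ and of the kernel classes as intervals with maximal elements $X\cup\{n\}$ correctly fills in those omitted details.
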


It follows immediately that $f_{X,Y}\,\Hc\,f_{Z,W}\iff f_{X,Y}=f_{Z,W}$
so $\OF_{n}$ is an $\Hc$-trivial monoid (which is also clear from
the fact that $\OF_{n}$ is a regular submonoid of $\Op_{n}$). In
particular, it has only trivial subgroups.

Next, we turn to describe the $\Jc$-classes. Recall that the rank
of a function $f$ is the size of its image. For $f_{X,Y}\in\OF_{n}$
we have $\rank(f_{X,Y})=|X|+1=|Y|+1$.
\begin{lem}
Let $f_{X,Y},f_{Z,W}\in\OF_{n}$ then $f_{X,Y}\,\Jc\,f_{Z,W}\iff\rank(f_{X,Y})=\rank(f_{Z,W})$.
\end{lem}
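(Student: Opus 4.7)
The plan is to prove both implications separately. The forward direction ($\Jc \Rightarrow$ equal rank) is immediate from the standard fact that function composition cannot increase the image size: if $f_{X,Y} = a \cdot f_{Z,W} \cdot b$ for some $a,b \in \OF_n$, then $\im(f_{X,Y}) \subseteq a(\im(f_{Z,W}))$ and so $\rank(f_{X,Y}) \leq \rank(f_{Z,W})$, with the reverse inequality following by symmetry from the other witness of $\Jc$-equivalence.

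For the backward direction, suppose $\rank(f_{X,Y}) = \rank(f_{Z,W})$, so $|X| = |Z|$ and $|Y| = |W|$. The key step is to introduce the mediating element $f_{X,W}$; forming it requires precisely the matching $|X| = |W|$, which is guaranteed by the equal-rank hypothesis, so $f_{X,W} \in \OF_n$. Now \lemref{R_L_Description} gives $f_{X,Y} \Lc f_{X,W}$ (same first index, i.e.\ same kernel) and $f_{X,W} \Rc f_{Z,W}$ (same second index, i.e.\ same image). Hence $f_{X,Y} \Dc f_{Z,W}$, and since $\OF_n$ is a finite monoid we have $\Dc = \Jc$, which yields the conclusion.

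No substantial obstacle is anticipated: the argument reduces to two standard moves, namely rank-monotonicity on one side and the finite-semigroup identity $\Dc = \Jc$ on the other, together with the combinatorial observation, specific to $\OF_n$, that the mediating element $f_{X,W}$ automatically lies in $\OF_n$ whenever the two original ranks agree. An equivalent alternative to the $\Dc$-route would be to write the explicit decomposition $f_{X,Y} = f_{W,Y} \cdot f_{Z,W} \cdot f_{X,Z}$ (and symmetrically $f_{Z,W} = f_{Y,W} \cdot f_{X,Y} \cdot f_{Z,X}$), all factors of which belong to $\OF_n$ precisely because all the index sets in question have the same cardinality; this would give $\Jc$-equivalence directly, but the $\Dc = \Jc$ route is cleaner and conceptually more transparent.
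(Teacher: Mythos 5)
Your proof is correct and takes essentially the same route as the paper: the paper also reduces to $\Jc=\Rc\circ\Lc$ in a finite monoid and uses a mediating element (it picks $f_{Z,Y}$ where you pick $f_{X,W}$, which is the symmetric choice), with Lemma \ref{lem:R_L_Description} supplying the $\Rc$- and $\Lc$-links. The only cosmetic difference is that you argue the forward direction separately via rank monotonicity, whereas the paper folds both directions into a single chain of equivalences.
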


\begin{proof}
Since $\OF_{n}$ is a finite monoid we know that $\Jc=\Rc\circ\Lc=\Lc\circ\Rc$
(see \cite[Proposition 2.1.4]{Howie1995}). Now \lemref{R_L_Description}
implies that 
\[
f_{X,Y}\,\Jc\,f_{Z,W}\iff f_{X,Y}\,\Rc\,f_{Z,Y}\,\Lc\,f_{Z,W}\iff|Z|=|Y|\iff\rank(f_{X,Y})=\rank(f_{Z,W}).
\]
\end{proof}
In conclusion, we obtain a neat description for the ``eggbox'' diagrams
of $\OF_{n}$ (see \cite[Section 2.2]{Howie1995} for the meaning
of an ``eggbox'' diagram). The monoid $\OF_{n}$ has $n$ $\Jc$-classes.
For every $0\leq k\leq n-1$ we can associate a $\Jc$-class $J_{k}$
of all functions $f_{X,Y}$ such that $|X|=|Y|=k$. Both the $\Rc$-classes
and the $\Lc$-classes in $J_{k}$ are indexed by subsets $Z\subseteq[n-1]$
with $|Z|=k$. For every $Z\subseteq[n-1]$ with $|Z|=k$ we have
an $\Rc$-class
\[
R_{Z}=\{f_{X,Z}\mid X\subseteq[n-1],\quad|X|=|Z|\}
\]
and an $\Lc$-class
\[
L_{Z}=\{f_{Z,Y}\mid Y\subseteq[n-1],\quad|Y|=|Z|\}.
\]

Every $\Hc$-class contains one element. Every $\Lc$-class ($\Rc$-class)
in $J_{k}$ contains ${n-1 \choose k}$ elements. The $\Jc$-class
$J_{k}$ contains ${n-1 \choose k}$ $\Lc$-classes ($\Rc$-classes)
and a total of ${n-1 \choose k}^{2}$ elements.

\paragraph{Reduced $E$-Fountain structure}

Set $E=\{f_{X,X}\mid X\subseteq[n-1]\}\subseteq E(\OF_{n})$. Note
that $E$ does not contain all the idempotents of $\OF_{n}$, but
it is precisely the set of idempotents of $\C_{n}$. According to
\lemref{R_L_Description}, we know that
\[
f_{Y,Y}\,\Rc\,f_{X,Y}\,\Lc\,f_{X,X}
\]
for every $X,Y\subseteq[n-1]$ with $|X|=|Y|$. Therefore, every $\Lc$-class
and every $\Rc$-class contains an element of $E$. Since $\Lc\subseteq\Lt$
and $\Rc\subseteq\Rt$, we deduce that every $\Lt$-class and every
$\Rt$-class contains an element of $E$ so $\OF_{n}$ is an $E$-Fountain
semigroup. Now, it is well-known (see \cite[Lemma 3.6]{Denton2010})
that the idempotents of a $\Jc$-trivial semigroup satisfy the property
that $ef=e$ if and only if $fe=e$. Since $E$ is the set of idempotents
of the $\Jc$-trivial semigroup $\C_{n}$ we obtain that $\OF_{n}$
is in fact a reduced $E$-Fountain semigroup. Note that 
\[
(f_{X,Y})^{\ast}=f_{X,X},\quad(f_{X,Y})^{+}=f_{Y,Y}
\]
and therefore
\[
f_{X,Y}\,\Lt\,f_{Z,W}\iff X=Z,\quad f_{X,Y}\,\Rt\,f_{Z,W}\iff Y=W.
\]
In particular we have 
\[
\Lt=\Lc,\quad\Rt=\Rc.
\]

As in the previous example, this immediately implies that $\OF_{n}$
satisfies the congruence condition (by \cite[Proposition 2.1.2]{Howie1995})
and both the right and left generalized ample conditions (by \lemref{Greens_lemma}).
The congruence condition implies that $\OF_{n}$ has an associated
category $\mathcal{C}(\OF_{n})$. The objects of $\mathcal{C}(\OF_{n})$
are in one-to one correspondence with sets $X\subseteq[n-1].$ Every
$f_{X,Y}\in\OF_{n}$ corresponds to a morphism from $X$ to $Y$ so
there exists a (unique) morphism from $X$ to $Y$ if and only if
$|X|=|Y|$. In other words, $\mathcal{C}(\OF_{n})$ represent the
equivalence relation $\sim$ defined on subsets of $[n-1]$ by $X\sim Y\iff|X|=|Y|$.
Two objects $X,Y\subseteq[n-1]$ are isomorphic if and only if $|X|=|Y|$
and in this case there is a unique morphism from $X$ to $Y$. In
particular, $\mathcal{C}(\OF_{n})$ is both a groupoid and a locally
trivial category - where a locally trivial category is a category
whose only endomorphisms (i.e., morphisms whose domain and range are
equal) are the identity morphisms. 

Since the Catalan monoid $\C_{n}$ is a submonoid of $\OF_{n}$ and
$E\subseteq\C_{n}$ it is also clear that $\C_{n}$ satisfies the
right and left generalized ample identities. This is a much neater
proof of this fact than the one given in \cite{Stein2021}.

\paragraph{The semigroup algebra}

Define a partial order $\preceq$ on $\OF_{n}$ by 
\[
\mbox{\ensuremath{f_{Z,W}\preceq f_{X,Y}\iff W\subsetneqq Y}}\text{ or }\mbox{\ensuremath{(W=Y\text{ and }Z\leq X)}}.
\]
Note that this is equivalent to
\[
\mbox{\ensuremath{f_{Z,W}\preceq f_{X,Y}\iff\im(f_{Z,W})\subsetneqq\im(f_{X,Y})}}\text{ or }\mbox{\ensuremath{(\im(f_{Z,W})=\im(f_{X,Y})\text{ and }Z\leq X)}}
\]
It is easy to verify that this is indeed a partial order (it is the
lexicographic order on the Cartesian product of $\subseteq$ and $\leq$).
Recall that the relation $\trianglelefteq_{l}$ is defined on a reduced
$E$-Fountain semigroup by the rule that $a\trianglelefteq_{l}b$
if and only if $a=be$ for some $e\in E$.
\begin{lem}
\label{lem:In_a_partial_order} In the semigroup $\OF_{n}$, there
is a containment of relations $\trianglelefteq_{l}\subseteq\preceq$.
\end{lem}

\begin{proof}
Let $f_{X,Y}\in\OF_{n}$ and $f_{Z,Z}\in E$. We need to show that
$f_{X,Y}f_{Z,Z}\preceq f_{X,Y}.$ First, note that $\im(f_{X,Y}f_{Z,Z})\subseteq\im(f_{X,Y})=Y\cup\{n\}$.
If $\im(f_{X,Y}f_{Z,Z})\subsetneqq Y\cup\{n\}$ then we are done.
It is left to consider the option that $\im(f_{X,Y}f_{Z,Z})=Y\cup\{n\}$.
In other words we assume that $f_{X,Y}f_{Z,Z}\,\Rc\,f_{X,Y}$. The
fact that $\Rc$ is a left congruence implies that 
\[
f_{X,X}f_{Z,Z}=f_{Y,X}f_{X,Y}f_{Z,Z}\,\Rc\,f_{Y,X}f_{X,Y}=f_{X,X}
\]
so
\[
\im(f_{X,X}f_{Z,Z})=X\cup\{n\}.
\]
Therefore, we can write 
\[
f_{X,X}f_{Z,Z}=f_{Z^{\prime},X}
\]
and observe that $f_{X,X},f_{Z,Z}\in\C_{n}$ so $f_{Z^{\prime},X}\in\C_{n}$
as well hence $Z^{\prime}\leq X$. Finally, 
\[
f_{X,Y}f_{Z,Z}=f_{X,Y}f_{X,X}f_{Z,Z}=f_{X,Y}f_{Z^{\prime},X}=f_{Z^{\prime},Y}.
\]
and therefore $f_{X,Y}f_{Z,Z}\preceq f_{X,Y}$ as required. 
\end{proof}
\thmref{iso_theorem} and \lemref{In_a_partial_order} implies that
for every field $\Bbbk$ we have an isomorphism of algebras 
\[
\Bbbk\OF_{n}\simeq\Bbbk\mathcal{C}(\OF_{n}).
\]

This isomorphism allows us to obtain a lot of information on the algebra
$\Bbbk\OF_{n}$. For instance we have the following immediate corollary:
\begin{lem}
For every field $\Bbbk$ the algebra $\Bbbk\OF_{n}$ is semisimple.
\end{lem}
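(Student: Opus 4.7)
The plan is to use the isomorphism $\Bbbk\OF_{n}\simeq\Bbbk\mathcal{C}(\OF_{n})$ together with the very explicit description of $\mathcal{C}(\OF_{n})$ obtained above. Recall from the paragraph preceding the lemma that $\mathcal{C}(\OF_{n})$ is the category associated to the equivalence relation $\sim$ on subsets of $[n-1]$, where $X\sim Y\iff |X|=|Y|$: between any two objects $X,Y$ with $|X|=|Y|$ there is a unique morphism (namely $C(f_{X,Y})$), and no morphisms otherwise. In particular the category is a (locally trivial) groupoid.

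The key observation is that $\mathcal{C}(\OF_{n})$ splits as a disjoint union of $n$ connected components $\mathcal{C}_{0},\dots,\mathcal{C}_{n-1}$, where $\mathcal{C}_{k}$ has object set $\{X\subseteq[n-1]:|X|=k\}$, so $|\mathcal{C}_{k}^{0}|=\binom{n-1}{k}$, and between any two of its objects there is exactly one morphism. Consequently
\[
\Bbbk\mathcal{C}(\OF_{n})\simeq\bigoplus_{k=0}^{n-1}\Bbbk\mathcal{C}_{k},
\]
and it suffices to identify each summand $\Bbbk\mathcal{C}_{k}$.

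For each $k$, I would exhibit an explicit isomorphism $\Bbbk\mathcal{C}_{k}\simeq M_{\binom{n-1}{k}}(\Bbbk)$ by sending the unique morphism $C(f_{X,Y})\in\mathcal{C}_{k}^{1}$ (where $|X|=|Y|=k$) to the matrix unit $E_{Y,X}$ in the matrix algebra whose rows and columns are indexed by the $k$-subsets of $[n-1]$. Composition in $\mathcal{C}_{k}$ matches matrix-unit multiplication: $C(f_{Y,Z})\cdot C(f_{X,Y})=C(f_{X,Z})$ corresponds to $E_{Z,Y}E_{Y,X}=E_{Z,X}$, and unmatched compositions (in the category algebra set to zero) correspond to $E_{Z,W}E_{Y,X}=0$ when $W\neq Y$. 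This is a straightforward verification on basis elements.

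Stringing these isomorphisms together yields
\[
\Bbbk\OF_{n}\simeq\bigoplus_{k=0}^{n-1}M_{\binom{n-1}{k}}(\Bbbk),
\]
which is a product of matrix algebras over $\Bbbk$ and hence semisimple for any field $\Bbbk$. There is no real obstacle here: once one has Theorem~\ref{thm:iso_theorem} and the explicit description of $\mathcal{C}(\OF_{n})$ as an equivalence relation with trivial automorphism groups, the only thing to check is the matrix-unit identification, which is immediate from the composition rule $f_{Y,Z}f_{X,Y}=f_{X,Z}$ noted earlier in this section.
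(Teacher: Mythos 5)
Your proof is correct, and it rests on the same foundation as the paper's: the isomorphism $\Bbbk\OF_{n}\simeq\Bbbk\mathcal{C}(\OF_{n})$ from Theorem \ref{thm:iso_theorem} together with the description of $\mathcal{C}(\OF_{n})$ as a groupoid with trivial endomorphism groups. The difference is in how the semisimplicity of the groupoid algebra is then established. The paper simply invokes the well-known criterion that the algebra of a finite groupoid is semisimple if and only if the order of each endomorphism group is invertible in $\Bbbk$, and notes that trivial groups always satisfy this. You instead prove the relevant special case of that criterion by hand: you split $\mathcal{C}(\OF_{n})$ into its $n$ connected components $\mathcal{C}_{k}$ indexed by cardinality, and identify each $\Bbbk\mathcal{C}_{k}$ with the matrix algebra $M_{\binom{n-1}{k}}(\Bbbk)$ via matrix units, using the composition rule $f_{Y,Z}f_{X,Y}=f_{X,Z}$ (your check that $C(f_{X,Y})\mapsto E_{Y,X}$ respects composition and sends undefined compositions to $0$ is the right one, given that $C(f_{X,Y})$ has domain $X$ and range $Y$). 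Your route is more self-contained and buys strictly more: it exhibits the explicit Wedderburn decomposition $\Bbbk\OF_{n}\simeq\bigoplus_{k=0}^{n-1}M_{\binom{n-1}{k}}(\Bbbk)$, from which one reads off the number and dimensions of the irreducible representations, whereas the paper's citation-based argument is shorter but yields only the bare semisimplicity statement.
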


\begin{proof}
It is well-known that the algebra $\Bbbk\mathcal{G}$ of a finite
groupoid $\mathcal{G}$ is semisimple if and only if the order of
every endomorphism group is invertible in $\Bbbk$ (\cite[Theorem 8.15]{Steinberg2016}).
In our case, the endomorphism groups of $\mathcal{C}(\OF_{n})$ are
trivial so $\Bbbk\mathcal{C}(\OF_{n})$ is semisimple for every field
$\Bbbk$ and hence also so is $\Bbbk\OF_{n}$.
\end{proof}
We remark that not many examples are known for non-inverse semigroups
with a semisimple algebra. Another notable example is the semigroup
of matrices over a finite field of appropriate characteristic (see
\cite[Section 5.6]{Steinberg2016}).

\paragraph{The monoid of order-preserving partial permutations}

Recall that the symmetric inverse monoid $\IS_{n}$ is the monoid
of all partial permutations on the set $[n]$. Our next observation
is related to the submonoid $\IO_{n}$ of all order-preserving partial
permutations (also denoted sometimes $\POI_{n}$- see \cite{Fernandes2001,Fernandes1997,Mazorchuk2003}).
In other words, $\IO_{n}$ consists of all partial permutations $\theta:[n]\to[n]$
such that $\theta(i)<\theta(j)$ if $i,j$ are in the domain of $\theta$
and $i<j$. It is clear that every $\theta\in\IO_{n}$ is completely
determined by its domain and image. In the other direction, given
$X,Y\subseteq[n]$ such that $|X|=|Y|$ there exists a unique $\theta_{X,Y}\in\IO_{n}$
whose domain is $X$ and image is $Y$. Therefore, elements of $\IO_{n}$
can also be indexed by pairs of sets $X,Y\subseteq[n]$ such that
$|X|=|Y|$. It is well-known that $\IO_{n}$ is an inverse semigroup
so in particular, it is a reduced $E$-Fountain semigroup for $E=E(\IO_{n})$
which satisfies the congruence condition. Moreover, its idempotents
commute and it satisfies the left and right ample conditions. Consider
the associated category $\mathcal{C}(\IO_{n})$ (which is in fact
an inductive groupoid - see \cite[Chapter 4]{Lawson1998}). It is
easy to deduce that the categories $\mathcal{C}(\IO_{n})$ and $\mathcal{C}(\OF_{n+1})$
are isomorphic. Indeed, the idempotents of $\IO_{n}$ are the partial
permutations of the form $\theta_{X,X}$ - where the domain equals
the image. In other words these are precisely the partial identities.
So objects of $\mathcal{C}(\IO_{n})$ are in one to one correspondence
with sets $X\subseteq[n]$. For every $\theta\in\IO_{n}$ there exists
a unique morphism $C(\theta)$ from the domain of $\theta$ to its
image. So the isomorphism between $\mathcal{C}(\IO_{n})$ and $\mathcal{C}(\OF_{n+1})$
is clear. According to \thmref{iso_theorem} (which for the case of
inverse semigroups is precisely \cite[Theorem 4.2]{Steinberg2006})
we know that $\Bbbk\mathcal{C}(\IO_{n})\simeq\Bbbk\IO_{n}$ for every
field $\Bbbk$. So we end up with the following result.
\begin{prop}
\label{prop:Iso_of_algebras_fixed_point}Let $\Bbbk$ be a field,
there is an isomorphism of algebras 
\[
\Bbbk\OF_{n+1}\simeq\Bbbk\IO_{n}.
\]
\end{prop}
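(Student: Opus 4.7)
The plan is to string together three algebra isomorphisms:
\[
\Bbbk\OF_{n+1} \;\simeq\; \Bbbk\mathcal{C}(\OF_{n+1}) \;\simeq\; \Bbbk\mathcal{C}(\IO_{n}) \;\simeq\; \Bbbk\IO_{n}.
\]
The leftmost isomorphism has been established above via \thmref{iso_theorem} together with \lemref{In_a_partial_order}, and the rightmost is the special case of \thmref{iso_theorem} for inverse semigroups (the Steinberg result cited just before the statement). So everything reduces to constructing the middle isomorphism, and for this it suffices to exhibit an isomorphism of small categories $\mathcal{C}(\IO_{n}) \cong \mathcal{C}(\OF_{n+1})$ and then extend $\Bbbk$-linearly.

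The first step is to identify objects. Objects of $\mathcal{C}(\OF_{n+1})$ are indexed by subsets of $[(n+1)-1] = [n]$, via the idempotents $f_{X,X}$. Objects of $\mathcal{C}(\IO_{n})$ are indexed by the same subsets $X \subseteq [n]$, via the partial identities $\theta_{X,X}$ (the idempotents of the inverse semigroup $\IO_{n}$ are precisely the partial identities). Send the object $X$ of $\mathcal{C}(\IO_{n})$ to the object $X$ of $\mathcal{C}(\OF_{n+1})$. For morphisms: an element of $\IO_{n}$ is uniquely determined by its domain $X$ and image $Y$ (an ordered pair of equal-sized subsets of $[n]$), and elements of $\OF_{n+1}$ were shown above to be in bijection with exactly the same data. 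Hence for each pair $(X,Y)$ with $|X|=|Y|$ each hom-set is a singleton, and otherwise empty. Define the functor on morphisms by $C(\theta_{X,Y}) \mapsto C(f_{X,Y})$.

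Functoriality is automatic: identities go to identities, and when $C(\theta_{Y,Z}) \cdot C(\theta_{X,Y})$ is defined we have $\theta_{Y,Z}\theta_{X,Y} = \theta_{X,Z}$ in $\IO_{n}$ and the mirror identity $f_{Y,Z}f_{X,Y} = f_{X,Z}$ in $\OF_{n+1}$, so compositions match. Both categories are just the equivalence relation $X \sim Y \iff |X|=|Y|$ on $2^{[n]}$ regarded as a category with one morphism where allowed, so the assignment above is automatically a bijection on each hom-set and on objects. There is no real obstacle: the argument is bookkeeping powered by the parallel parametrisation of $\IO_{n}$ and $\OF_{n+1}$ by pairs of equal-sized subsets of $[n]$. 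Extending the resulting category isomorphism $\Bbbk$-linearly and composing with the two outer algebra isomorphisms yields $\Bbbk\OF_{n+1} \simeq \Bbbk\IO_{n}$.
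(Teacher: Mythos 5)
Your proposal is correct and follows essentially the same route as the paper: both chains $\Bbbk\OF_{n+1}\simeq\Bbbk\mathcal{C}(\OF_{n+1})\simeq\Bbbk\mathcal{C}(\IO_{n})\simeq\Bbbk\IO_{n}$, with the outer isomorphisms coming from \thmref{iso_theorem} (via \lemref{In_a_partial_order} on one side and the inverse-semigroup/Steinberg case on the other) and the middle one from the observation that both categories are the equivalence relation $|X|=|Y|$ on subsets of $[n]$ with singleton hom-sets. The only difference is that you spell out the functoriality check ($f_{Y,Z}f_{X,Y}=f_{X,Z}$ mirroring $\theta_{Y,Z}\theta_{X,Y}=\theta_{X,Z}$) which the paper leaves implicit.
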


\begin{rem}
Note that we have an explicit description of this isomorphism. An
isomorphism $\varphi:\Bbbk\OF_{n+1}\to\Bbbk\mathcal{C}(\OF_{n+1})$
is given in \thmref{iso_theorem} and according to \cite{Steinberg2006},
an isomorphism $\mbox{\ensuremath{\psi:\mathcal{C}(\Bbbk\IO_{n})\to\Bbbk\IO_{n}}}$
can be defined by 
\[
\psi(C(\theta))=\sum_{\tau\leq\theta}\mu(\tau,\theta)\tau
\]
where $\leq$ is the standard partial order on an inverse semigroup
and $\mu$ is its related Möbius function (see \cite[Chapter 3]{Stanley1997}).
\end{rem}

As a final observation, we note that a similar isomorphism holds for
certain submonoids. Consider the Catalan monoid $\C_{n}\subseteq\OF_{n}$.
The objects of $\mathcal{C}(\C_{n})$ are sets $X\subseteq[n-1]$
and there is a unique morphism from $X$ to $Y$ if ($|X|=|Y|$ and)
$X\leq Y$. Now consider the monoid $\IC_{n}\subseteq\IO_{n}$ of
all order-preserving and order-increasing partial permutations (see
\cite[Chapter 14]{Ganyushkin2009b}). In other words, a partial permutation
$\theta$ belongs to $\IC_{n}$ if $\theta\in\IO_{n}$ and $i\leq\theta(i)$
for every $i$ in the domain of $\theta$. The semigroup $\IC_{n}$
is not inverse. However, since $E(\IC_{n})=E(\IO_{n})$ it is clear
that it is a reduced $E$-Fountain semigroup which satisfies both
the congruence and the right\textbackslash left ample conditions.
It is also clear that $\theta_{X,Y}\in\IO_{n}$ belongs to $\IC_{n}$
if and only if $X\leq Y$. Therefore, $\mathcal{C}(\IC_{n})$ is isomorphic
to $\mathcal{C}(\C_{n+1})$ and we obtain:
\begin{cor}
\footnote{This is an unpublished observation of the authors of \cite{Margolis2018A}
(private communication).}Let $\Bbbk$ be a field, there is an isomorphism of algebras 
\[
\Bbbk\C_{n+1}\simeq\Bbbk\IC_{n}.
\]
\end{cor}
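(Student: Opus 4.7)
The plan is to follow exactly the template used to prove \propref{Iso_of_algebras_fixed_point}: apply \thmref{iso_theorem} to each of $\C_{n+1}$ and $\IC_{n}$ separately, and then identify the two associated categories.

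First I would verify that the hypotheses of \thmref{iso_theorem} are met for both semigroups. For $\C_{n+1}$ this is essentially free from what has already been established in this section: $\C_{n+1}$ is a submonoid of $\OF_{n+1}$ containing the idempotent set $E$, and the excerpt has already noted that $\C_{n+1}$ inherits the reduced $E$-Fountain structure, the congruence condition, and both generalized ample identities. The relation $\trianglelefteq_{l}$ on $\C_{n+1}$ is just the restriction of $\trianglelefteq_{l}$ on $\OF_{n+1}$, which by \lemref{In_a_partial_order} is contained in the partial order $\preceq$, so the restriction is contained in a partial order. For $\IC_{n}$ the excerpt has already recorded that it is reduced $E$-Fountain with the congruence condition and both ample identities; moreover, since $\IC_{n}\subseteq \IO_{n}$ and $\IO_{n}$ is inverse, $\trianglelefteq_{l}$ on $\IC_{n}$ is the restriction of the natural partial order on $\IO_{n}$, hence itself a partial order. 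Therefore \thmref{iso_theorem} yields
\[
\Bbbk\C_{n+1}\simeq\Bbbk\mathcal{C}(\C_{n+1}),\qquad \Bbbk\IC_{n}\simeq\Bbbk\mathcal{C}(\IC_{n}).
\]

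Second I would exhibit an explicit isomorphism $\mathcal{C}(\C_{n+1})\simeq \mathcal{C}(\IC_{n})$. The excerpt identifies the objects of $\mathcal{C}(\C_{n+1})$ with subsets $X\subseteq[n]$ and shows that there is a (unique) morphism from $X$ to $Y$ precisely when $|X|=|Y|$ and $X\leq Y$. The same description applies on the inverse semigroup side: the idempotents of $\IC_{n}$ coincide with those of $\IO_{n}$, so they are the partial identities $\theta_{X,X}$ indexed by $X\subseteq[n]$, and $\theta_{X,Y}\in\IC_{n}$ if and only if $|X|=|Y|$ and $X\leq Y$. Hence the assignment $X\mapsto X$ on objects, together with the unique morphism bijection on hom-sets, is a well-defined isomorphism of categories (functoriality is automatic because both categories have at most one morphism between any pair of objects, so composition is forced).

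Combining the three isomorphisms
\[
\Bbbk\C_{n+1}\simeq\Bbbk\mathcal{C}(\C_{n+1})\simeq\Bbbk\mathcal{C}(\IC_{n})\simeq\Bbbk\IC_{n}
\]
yields the claim. The only non-trivial step is checking the hypothesis of \thmref{iso_theorem} for each semigroup, and I expect the main technical point to be confirming that $\trianglelefteq_{l}$ on $\IC_{n}$ sits inside a partial order — but this reduces immediately to the fact that the natural partial order of the inverse overmonoid $\IO_{n}$ is a partial order whose restriction agrees with $\trianglelefteq_{l}$ because $E(\IC_{n})=E(\IO_{n})$. After that, the categorical identification is tautological and the algebra isomorphism is formal.
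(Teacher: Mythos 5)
Your proposal is correct and follows essentially the same route as the paper: the paper likewise establishes that both $\C_{n+1}$ and $\IC_{n}$ are reduced $E$-Fountain semigroups satisfying the congruence condition and the ample identities, observes that $\mathcal{C}(\C_{n+1})$ and $\mathcal{C}(\IC_{n})$ are isomorphic (objects $X\subseteq[n]$, a unique morphism $X\to Y$ exactly when $|X|=|Y|$ and $X\leq Y$), and concludes via \thmref{iso_theorem}. Your extra care in checking that $\trianglelefteq_{l}$ sits inside a partial order on each submonoid is a welcome filling-in of details the paper leaves implicit.
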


\bibliographystyle{plain}
\bibliography{library}

\end{document}